\theoremstyle{plain}
\newtheorem{thm}{Theorem}[section]
\newtheorem{lem}[thm]{Lemma}
\newtheorem{prop}[thm]{Proposition}
\newtheorem{cor}[thm]{Corollary}
\newtheorem*{theorem*}{Theorem}
\newtheorem*{cor*}{Corollary}
\theoremstyle{definition}
\newtheorem{rem}{Remark}
\newtheorem{ex}{Example}
\newtheorem{defn}[thm]{Definition}
\DeclareMathOperator{\Spec}{Spec}
\DeclareMathOperator{\cone}{Cone}
\DeclareMathOperator{\NW}{NW}
\DeclareMathOperator{\TV}{TV}
\DeclareMathOperator{\G}{G}
\DeclareMathOperator{\casei}{(i)}
\DeclareMathOperator{\dom}{dom}
\DeclareMathOperator{\Ess}{Ess}
\DeclareMathOperator{\Def}{Def}
\DeclareMathOperator{\Stab}{Stab}
\DeclareMathOperator{\ce}{\mathfrak{c}}
\DeclareMathOperator{\Val}{Val}
\DeclareMathOperator{\rank}{rank}
\newcommand{\RR}{\mathbb{R}}      
\newcommand{\QQ}{\mathbb{Q}}     
\newcommand{\ZZ}{\mathbb{Z}}      
\newcommand{\CC}{\mathbb{C}}
\newcommand{\Ione}{\mathcal{I}_G^{(1)}}
\begin{document}

\title{Rigid toric matrix Schubert varieties}

\author{Irem Portakal}

\address{Technical University of Munich\\ Department of Mathematics, Munich, Germany}

\email{mail@irem-portakal.de}

\keywords{matrix Schubert variety, toric variety, bipartite graph, Rothe diagram, deformation}
\subjclass[2020]{14B07, 14M15, 14M25, 52B20, 05C69}

\begin{abstract}
For a given permutation $\pi \in S_N$, Fulton proves that the matrix Schubert variety $\overline{X_{\pi}} \cong Y_{\pi} \times \mathbb{C}^q$ can be defined via certain rank conditions encoded in the Rothe diagram of $\pi$. In the case where $Y_{\pi}:=\TV(\sigma_{\pi})$ is toric (with respect to a $(\mathbb{C}^*)^{2N-1}$ action), we show that it can be described as an edge ideal of a bipartite graph $G^{\pi}$. We characterize the lower dimensional faces of the associated so-called edge cone $\sigma_{\pi}$ explicitly in terms of subgraphs of $G^{\pi}$ and present a combinatorial study for the first order deformations of $Y_{\pi}$. We prove that $Y_{\pi}$ is rigid if and only if the three-dimensional faces of $\sigma_{\pi}$ are all simplicial. Moreover, we reformulate this result in terms of Rothe diagram of $\pi$.

\end{abstract}

\maketitle
\section{Introduction}
In this paper, we are studying the \emph{matrix Schubert varieties} $\overline{X_{\pi}} \cong Y_{\pi} \times \mathbb{C}^q$ associated to a permutation $\pi \in S_N$, where $q$ is maximal possible. These varieties first appear during Fulton’s study of the degeneracy loci of flagged vector bundles in \cite{Fulton92}. In \cite{factschu}, Knutson and Miller show that Schubert polynomials are multidegrees of matrix Schubert varieties. Moreover the matrix Schubert variety is in fact related to Schubert variety $X_{\pi}$ in the full flag manifold  via the isomorphism in (\cite{kazhdanlusztig}, Lemma A.4).  The matrix Schubert varieties are normal and one can define them by certain rank conditions encoded in the \emph{Rothe diagram}. Our goal is to investigate the natural restricted torus action on  these varieties. Escobar and M\'{e}sz\'{a}ros \cite{escobarmeszaros} study the \emph{toric matrix Schubert varieties} via understanding their moment polytope. We present a reformulation of their classification in terms of \emph{bipartite graphs}. The significance of this restatement is that it allows one to study the \emph{first order deformations} of the matrix Schubert variety in terms of graphs by \cite{rigidportakal}. The toric varieties arising from bipartite graphs have been studied in various papers \cite{herzog2015}, \cite{binomialbook}, \cite{hibiwalk}, \cite{valenciavillareal} in different perspectives. We will review few aspects of this theory and bring our attention to the classification of the \emph{rigid} toric matrix Schubert varieties. The toric varieties arising from graphs enable us to produce many interesting examples of rigid varieties. In fact, the first example of a rigid singularity in \cite{firstrigidpaper} is the cone over the Segre embedding $\mathbb{P}^2 \times \mathbb{P}^1 \rightarrow \mathbb{P}^{2r+1}$ which is the affine toric variety associated to the complete bipartite graph $K_{r+1, 2}$. Following the techniques in \cite{altmann96,rigidportakal} for the study of deformations of toric varieties, we classify rigid toric varieties $Y_{\pi}$ in terms of bipartite graphs and Rothe diagram.\\

\noindent The organization of the paper is as follows. In preliminaries we present some basic facts on matrix Schubert varieties and give a brief exposition of toric varieties arising from bipartite graphs. In Section \ref{torusactiononmsv} we reformulate the question of classification of toric matrix Schubert varieties to bipartite graphs. We then indicate how graphs may be used to investigate the complexity of the torus action in the sense of $T$-varieties \cite{pdivisor}, \cite{geometryofT}. Section \ref{section4} starts with a discussion of deformation theory of toric varieties. Furthermore it provides a detailed exposition of faces of the moment (edge) cone of $Y_{\pi}$. In Lemma \ref{schulemma1}, we characterize the extremal rays of the edge cone. In Proposition \ref{twofacesschu} and Proposition \ref{threefacesschu}, we present conditions for extremal rays to span a two-dimensional face and a three-dimensional face respectively. Finally, we conclude the following result.
\begin{theorem*}[{Theorem \ref{toricschurigid}}]
The toric variety $Y_{\pi}$ is rigid if and only if its moment (edge) cone has simplicial three-dimensional faces.
\end{theorem*}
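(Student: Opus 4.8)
The plan is to compute the module $T^1_{Y_\pi}$ of first order deformations and to show that it vanishes precisely when every three-dimensional face of $\sigma_\pi$ is simplicial. Since $Y_\pi = \TV(\sigma_\pi)$ is an affine toric variety, $T^1_{Y_\pi}$ carries an $M$-grading, $T^1_{Y_\pi} = \bigoplus_{R \in M} T^1_{Y_\pi}(-R)$, and $Y_\pi$ is rigid exactly when all graded pieces vanish. I would invoke Altmann's combinatorial description of these pieces \cite{altmann96}: for each degree $R$, the space $T^1_{Y_\pi}(-R)$ is read off from the cross-section of $\sigma_\pi$ in degree $R$, that is, from the lattice polyhedral complex cut out on $\sigma_\pi$ by the affine hyperplane $\{\langle R, \cdot\rangle = 1\}$. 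The first step is therefore to translate this cross-sectional data into the language of subgraphs of $G^\pi$ already developed in \hyperref[schulemma1]{Lemma \ref*{schulemma1}}, \hyperref[twofacesschu]{Proposition \ref*{twofacesschu}} and \hyperref[threefacesschu]{Proposition \ref*{threefacesschu}}, so that every face of $\sigma_\pi$ relevant to a given degree becomes visible combinatorially.

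The second step is a localization of $T^1_{Y_\pi}$ at the low-dimensional faces of $\sigma_\pi$. Because a two-dimensional cone is always generated by exactly two rays, every two-dimensional face of $\sigma_\pi$ is automatically simplicial; hence the first faces that can fail to be simplicial are three-dimensional, and these are exactly the cones over lattice polygons with at least four vertices. Using Altmann's formula I would show that a nonzero contribution to $T^1_{Y_\pi}(-R)$ forces the cross-section in degree $R$ to contain a two-dimensional cell with at least four edges, equivalently that $\sigma_\pi$ has a non-simplicial three-dimensional face ``visible'' in degree $R$. Conversely, a three-dimensional face that is a cone over a triangle contributes nothing, since a triangle admits only the trivial (scalar) Minkowski summand, whereas a cone over an $n$-gon with $n \ge 4$ contributes a deformation space of positive dimension. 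This is the mechanism behind the example recalled in the introduction, the cone over the Segre embedding $\mathbb{P}^2\times\mathbb{P}^1$ attached to $K_{r+1,2}$, whose square cross-section is non-simplicial and which is not rigid \cite{firstrigidpaper}.

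Assembling the two steps gives both implications. If every three-dimensional face of $\sigma_\pi$ is simplicial, then every two-dimensional cell of every cross-section is a triangle, each graded piece $T^1_{Y_\pi}(-R)$ vanishes, and $Y_\pi$ is rigid. If some three-dimensional face is non-simplicial, the positive-dimensional summand space of the associated polygon produces a nonzero graded piece, so $Y_\pi$ is not rigid. The final step is to re-express this criterion through the combinatorics of $G^\pi$, and ultimately through the Rothe diagram of $\pi$, by feeding the face descriptions of \hyperref[twofacesschu]{Proposition \ref*{twofacesschu}} and \hyperref[threefacesschu]{Proposition \ref*{threefacesschu}} into the equivalence just obtained.

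The hard part will be the localization in the second step: showing that the global module $T^1_{Y_\pi}$ is genuinely detected by the three-dimensional faces alone. One must rule out contributions from higher-dimensional non-simplicial faces — for instance a four-dimensional cone all of whose facets are simplicial, such as the cone over a triangular bipyramid, has to contribute nothing — and one must verify that distinct non-simplicial three-dimensional faces do not produce cancelling contributions in a common degree. Controlling these degree-by-degree cross-sections, and matching them precisely to the subgraph conditions for extremal rays and for two- and three-dimensional faces, is where the computation of \cite{rigidportakal} specialized to $\sigma_\pi$ will carry the essential weight.
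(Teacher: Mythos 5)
Your framework coincides with the paper's: Altmann's $M$-graded description of $T^1_{Y_\pi}$ via the cross-cuts $Q(R)$, fed by the graph-theoretic classification of the extremal rays and of the two- and three-dimensional faces of $\sigma_\pi$. The ``only if'' direction also matches in spirit: the paper invokes Theorem \ref{theorem15} (a non-simplicial three-dimensional face of an edge cone makes $\TV(G)$ non-rigid), which is exactly your ``a cone over an $n$-gon with $n\ge 4$ has a positive-dimensional space of Minkowski summands'' mechanism, and Lemma \ref{notrigidschus} identifies which configurations of first independent sets produce such faces.

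The gap is in your second step, the ``localization''. You assert that a nonzero class in $T^1_{Y_\pi}(-R)$ forces the cross-cut $Q(R)$ to contain a two-dimensional cell with at least four edges, and justify this by noting that a triangle admits only scalar Minkowski summands. But the space $V(R)$ of Theorem \ref{altmann96} is not a direct sum of contributions of the individual compact $2$-faces of $Q(R)$: it consists of tuples indexed by \emph{all} compact edges of $Q(R)$, subject to one cycle relation per compact $2$-face and to gluing conditions only at \emph{non-lattice} vertices. A nonzero element of $V_{\CC}(R)/\CC(\underline{1})$ can therefore occur with every $2$-cell of $Q(R)$ a triangle --- for instance when the compact part of $Q(R)$ is disconnected, or its pieces are linked only through lattice vertices, or it contains compact edges lying in no compact $2$-face at all; this is why the statement ``all three-dimensional faces simplicial $\Rightarrow$ rigid'' is false for general affine toric varieties, even ones smooth in codimension two. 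Excluding these degenerate degrees for $\sigma_\pi$ is precisely the content of the paper's proof: its cases 1--4 enumerate every way a non-$2$-face pair or non-$3$-face triple of extremal rays (classified in Propositions \ref{twofacesschu} and \ref{threefacesschu}) can appear among the vertices of $Q(R)$, and check by hand, using Lemma \ref{schulemma1} and Lemma \ref{mainlemschu}, that the remaining vertices always supply enough shared edges and non-lattice vertices to force all $t_i$ equal. You explicitly flag this step as ``the hard part'' but do not carry it out, so as written the proposal establishes only the ``only if'' direction.
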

\noindent We translate this result to the Rothe diagram of $\pi$ and restate the classification in terms of certain shapes on the diagram, in particular solely depending on the pattern of the {\em essential set}  (Definition \ref{def: essential set}) of $\pi$.
\begin{cor*}[Corollary \ref{reformulation}]
Let $\Ess(\pi)= \{ (x_i,y_i) \ | \ x_{k+1} < \ldots <x_{1} \text{ and } y_{1} < \ldots < y_{k+1} \}$ with $k \geq 3$ be the essential set of the Rothe diagram of $\pi \in S_N$. Then the toric variety $Y_{\pi}$ is rigid if and only if 
\begin{itemize}
\item[$\bullet$] $(x_1,y_1) \neq (m,2) $ and $(x_{k+1}, y_{k+1}) \neq (2,n)$ or
\item[$\bullet$] for any $i \in [k], $ $(x_{i}, y_i) \neq (x_{i+1} +1 , y_{i+1}-1) $.
\end{itemize} 
where $m$ is the length and $n$ is the width of the smallest rectangle containing $L(\pi)$ from Definition~\ref{def: essential set}.
\end{cor*}

\section{Preliminaries}\label{preliminaries}

\subsection{Matrix Schubert varieties}
\noindent In this section, we adopt the conventions from \cite{escobarmeszaros} for matrix Schubert varieties.  We are mainly interested in matrix Schubert varieties for their effective torus actions and deformations. The statements presented in this section can be found in \cite{Fulton92} and \cite{factschu}. \\

\noindent Let $\pi \in S_N$ be a permutation. We denote its {\it permutation matrix} by $\pi \in \CC^{N \times N}$ as well and define it as follows:
  \[  \pi_{(i,j)}=\left\{
                \begin{array}{ll}
                  1, \ \text{if } \pi(j)=i \\
                  0, \text{ otherwise}.\\
                \end{array}
              \right.  \]

\vspace{0.2cm}
\noindent Let $B_{\_}$ denote the group of invertible lower triangular matrices and $B_{+}$ denote the group of invertible upper triangular $N\times N$ matrices. The product $B_{\_} \times B_{+}$ acts from left on $\CC^{N \times N}$ and its action defined as:
\begin{eqnarray*}
(B_{\_} \times B_{+}) \times \CC^{N \times N} &\longrightarrow &\CC^{N \times N}\\
((M_{\_},M_{+}) ,\mathcal{M}) &\mapsto&  M_{\_} \mathcal{M} M_{+}^{-1}
\end{eqnarray*}

\begin{defn}
Let $\mathcal{M}_{(a,b)} \in \CC^{a \times b}$ be the $a \times b$ matrix located at the upper left corner of $\mathcal{M}~\in~\CC^{N \times N}$, where $1\leq a \leq N$ and $1 \leq b \leq N$. The rank function of $\mathcal{M}$ is defined as $r_{\mathcal{M}}(a,b):=\text{rank}(\mathcal{M}_{(a,b)})$. 
\end{defn}
\vspace{0.2cm}
\noindent Note that the multiplication of a matrix $\mathcal{M} \in \CC^{N \times N}$ on the left with $M_{\_}$ corresponds to the downwards row operations and multiplication of $\mathcal{M}$ on the right with $M_{+}$ corresponds to the rightward column operations. Hence, one observes that $\mathcal{M} \in B_{\_} \pi B_{+}$ if and only if $r_{\mathcal{M}}({a,b})~=~ r_{\pi} (a,b)$ for all $(a,b) \in [N] \times [N]$.
 \vspace{0.2cm}

\begin{defn}
The Zariski closure of the orbit $\overline{X_{\pi}}:=\overline{B_{\_} \pi B_{+}} \subseteq \CC^{N \times N}$ is called \textit{the matrix Schubert variety} of $\pi$. 
\end{defn}

\noindent Rothe presented a combinatorial technique for visualizing inversions of the permutation $\pi$.

\begin{defn}
The \it{Rothe diagram} of $\pi$ is defined as $D(\pi) = \{ (\pi(j),i): i<j, \pi(i) > \pi(j)\}$.
\end{defn}

\noindent One can draw the Rothe diagram $D(\pi)$ in the following way: Consider the permutation matrix $\pi$ in an $N \times N$ grid. Cross out each box containing 1 and all the other boxes to the south and east of each box containing 1.

\begin{figure}[H]
\centering
\begin{tikzpicture}[scale=0.625,every path/.style={>=latex},every node/.style={draw,circle,fill=black,scale=0.2}]
 \path[draw, fill=cyan!40,opacity=1] (0,3)--(1,3)--(1,4)--(0,4)--cycle;
 \path[draw, fill=cyan!40,opacity=1] (2,1)--(3,1)--(3,2)--(2,2)--cycle;

  \node            (a0) at (0,0)  {};
  \node            (b0) at (1,0)  {};
  \node            (c0) at (2,0) {};
  \node            (d0) at (3,0) {};
  \node            (e0) at (4,0) {};

  \node            (a1) at (0,1)  {};
  \node            (b1) at (1,1)  {};
  \node            (c1) at (2,1) {};
  \node            (d1) at (3,1) {};
  \node            (e1) at (4,1) {};

  \node            (a2) at (0,2)  {};
  \node            (b2) at (1,2)  {};
  \node            (c2) at (2,2) {};
  \node            (d2) at (3,2) {};
  \node            (e2) at (4,2) {};

  \node            (a3) at (0,3)  {};
  \node            (b3) at (1,3)  {};
  \node            (c3) at (2,3) {};
  \node            (d3) at (3,3) {};
  \node            (e3) at (4,3) {};

  \node            (a4) at (0,4)  {};
  \node            (b4) at (1,4)  {};
  \node            (c4) at (2,4) {};
  \node            (d4) at (3,4) {};
  \node            (e4) at (4,4) {};

  \node            (p1) at (0.5,2.5) {};
  \node            (p2) at (1.5,3.5) {};
  \node            (p3) at (2.5,0.5) {};
  \node            (p4) at (3.5,1.5) {};

\draw[-,red] (p1) edge (0.5,0);
\draw[-,red] (p1) edge (4,2.5);

\draw[-,red] (p2) edge (1.5,0);
\draw[-,red] (p2) edge (4,3.5);

\draw[-,red] (p3) edge (2.5,0);
\draw[-,red] (p3) edge (4,0.5);

\draw[-,red] (p4) edge (3.5,0);
\draw[-,red] (p4) edge (4,1.5);

 \draw[-] (a0) edge (e0);
  \draw[-] (a1) edge (e1);
   \draw[-] (a2) edge (e2);
    \draw[-] (a3) edge (e3);
    \draw[-] (a4) edge (e4);

 \draw[-] (a0) edge (a4); 
 \draw[-] (b0) edge (b4); 
 \draw[-] (c0) edge (c4); 
 \draw[-] (d0) edge (d4); 
 \draw[-] (e0) edge (e4); 

\end{tikzpicture}\captionof{figure}{The Rothe Diagram of $[2143] \in S_4$.}
\end{figure}
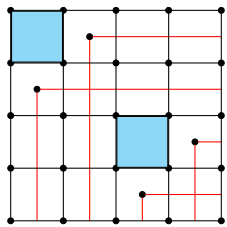

\begin{defn}\label{def: essential set}
The connected part containing the box $(1,1)$ in the diagram is called the dominant piece and denoted by $\dom(\pi)$. The set consisting of the south-east corners of each connected component of $D(\pi)$ is called the essential set and denoted as $\Ess(\pi)$. Let $\NW(\pi)$ denote the union of the boxes located to the north-west of each box in $D(\pi)$. Finally, let $L(\pi) := \NW(\pi) \setminus \dom(\pi)$ and $L'(\pi):= L(\pi) \setminus D(\pi)$. 
\end{defn}

\noindent In Figure \ref{domrep} below, one can visualize these sets in the Rothe diagram of $[2143]\in~ S_4$.
\vspace{0.2cm}
\begin{center}

\begin{tikzpicture}[baseline=1cm,scale=0.625,every path/.style={>=latex},every node/.style={draw,circle,fill=black,scale=0.2}]

 \path[draw, fill=cyan!40,opacity=1] (0,3)--(1,3)--(1,4)--(0,4)--cycle;
  \node            (a0) at (0,0)  {};
  \node            (b0) at (1,0)  {};
  \node            (c0) at (2,0) {};
  \node            (d0) at (3,0) {};
  \node            (e0) at (4,0) {};

  \node            (a1) at (0,1)  {};
  \node            (b1) at (1,1)  {};
  \node            (c1) at (2,1) {};
  \node            (d1) at (3,1) {};
  \node            (e1) at (4,1) {};

 \ node            (a2) at (0,2)  {};
  \node            (b2) at (1,2)  {};
  \node            (c2) at (2,2) {};
  \node            (d2) at (3,2) {};
  \node            (e2) at (4,2) {};

  \node            (a3) at (0,3)  {};
  \node            (b3) at (1,3)  {};
  \node            (c3) at (2,3) {};
  \node            (d3) at (3,3) {};
  \node            (e3) at (4,3) {};

  \node            (a4) at (0,4)  {};
  \node            (b4) at (1,4)  {};
  \node            (c4) at (2,4) {};
  \node            (d4) at (3,4) {};
  \node            (e4) at (4,4) {};

 \draw[-] (a0) edge (e0);
  \draw[-] (a1) edge (e1);
   \draw[-] (a2) edge (e2);
    \draw[-] (a3) edge (e3);
    \draw[-] (a4) edge (e4);

 \draw[-] (a0) edge (a4); 
 \draw[-] (b0) edge (b4); 
 \draw[-] (c0) edge (c4); 
 \draw[-] (d0) edge (d4); 
 \draw[-] (e0) edge (e4); 

\end{tikzpicture}
\hspace{1cm}
\begin{tikzpicture}[baseline=1cm,scale=0.625,every path/.style={>=latex},every node/.style={draw,circle,fill=black,scale=0.2}]

 \path[draw, fill=BurntOrange!40,opacity=1] (0,1)--(3,1)--(3,4)--(0,4)--cycle;
  \node            (a0) at (0,0)  {};
  \node            (b0) at (1,0)  {};
  \node            (c0) at (2,0) {};
  \node            (d0) at (3,0) {};
  \node            (e0) at (4,0) {};

  \node            (a1) at (0,1)  {};
  \node            (b1) at (1,1)  {};
  \node            (c1) at (2,1) {};
  \node            (d1) at (3,1) {};
  \node            (e1) at (4,1) {};

  \node            (a2) at (0,2)  {};
  \node            (b2) at (1,2)  {};
  \node            (c2) at (2,2) {};
  \node            (d2) at (3,2) {};
  \node            (e2) at (4,2) {};

  \node            (a3) at (0,3)  {};
  \node            (b3) at (1,3)  {};
  \node            (c3) at (2,3) {};
  \node            (d3) at (3,3) {};
  \node            (e3) at (4,3) {};

  \node            (a4) at (0,4)  {};
  \node            (b4) at (1,4)  {};
  \node            (c4) at (2,4) {};
  \node            (d4) at (3,4) {};
  \node            (e4) at (4,4) {};

 \draw[-] (a0) edge (e0);
  \draw[-] (a1) edge (e1);
   \draw[-] (a2) edge (e2);
    \draw[-] (a3) edge (e3);
    \draw[-] (a4) edge (e4);

 \draw[-] (a0) edge (a4); 
 \draw[-] (b0) edge (b4); 
 \draw[-] (c0) edge (c4); 
 \draw[-] (d0) edge (d4); 
 \draw[-] (e0) edge (e4); 

\end{tikzpicture}
\hspace{1cm}
\begin{tikzpicture}[baseline=1cm,scale=0.625,every path/.style={>=latex},every node/.style={draw,circle,fill=black,scale=0.2}]
 \path[draw, fill=cyan!40,opacity=1] (0,3)--(1,3)--(1,4)--(0,4)--cycle;
 \path[draw, fill=cyan!40,opacity=1] (2,1)--(3,1)--(3,2)--(2,2)--cycle;
  \node            (a0) at (0,0)  {};
  \node            (b0) at (1,0)  {};
  \node            (c0) at (2,0) {};
  \node            (d0) at (3,0) {};
  \node            (e0) at (4,0) {};

  \node            (a1) at (0,1)  {};
  \node            (b1) at (1,1)  {};
  \node            (c1) at (2,1) {};
  \node            (d1) at (3,1) {};
  \node            (e1) at (4,1) {};

  \node            (a2) at (0,2)  {};
  \node            (b2) at (1,2)  {};
  \node            (c2) at (2,2) {};
  \node            (d2) at (3,2) {};
  \node            (e2) at (4,2) {};

  \node            (a3) at (0,3)  {};
  \node            (b3) at (1,3)  {};
  \node            (c3) at (2,3) {};
  \node            (d3) at (3,3) {};
  \node            (e3) at (4,3) {};

  \node            (a4) at (0,4)  {};
  \node            (b4) at (1,4)  {};
  \node            (c4) at (2,4) {};
  \node            (d4) at (3,4) {};
  \node            (e4) at (4,4) {};

 \draw[-] (a0) edge (e0);
  \draw[-] (a1) edge (e1);
   \draw[-] (a2) edge (e2);
    \draw[-] (a3) edge (e3);
    \draw[-] (a4) edge (e4);

 \draw[-] (a0) edge (a4); 
 \draw[-] (b0) edge (b4); 
 \draw[-] (c0) edge (c4); 
 \draw[-] (d0) edge (d4); 
 \draw[-] (e0) edge (e4); 

\end{tikzpicture}
\hspace{1cm}
\begin{tikzpicture}[baseline=1cm,scale=0.625,every path/.style={>=latex},every node/.style={draw,circle,fill=black,scale=0.2}]
 \path[draw, fill=Orchid!40,opacity=1] (0,1)--(2,1)--(2,2)--(3,2)--(3,4)--(1,4)--(1,3)--(0,3)--cycle;
  \node            (a0) at (0,0)  {};
  \node            (b0) at (1,0)  {};
  \node            (c0) at (2,0) {};
  \node            (d0) at (3,0) {};
  \node            (e0) at (4,0) {};

  \node            (a1) at (0,1)  {};
  \node            (b1) at (1,1)  {};
  \node            (c1) at (2,1) {};
  \node            (d1) at (3,1) {};
  \node            (e1) at (4,1) {};

  \node            (a2) at (0,2)  {};
  \node            (b2) at (1,2)  {};
  \node            (c2) at (2,2) {};
  \node            (d2) at (3,2) {};
  \node            (e2) at (4,2) {};

  \node            (a3) at (0,3)  {};
  \node            (b3) at (1,3)  {};
  \node            (c3) at (2,3) {};
  \node            (d3) at (3,3) {};
  \node            (e3) at (4,3) {};

  \node            (a4) at (0,4)  {};
  \node            (b4) at (1,4)  {};
  \node            (c4) at (2,4) {};
  \node            (d4) at (3,4) {};
  \node            (e4) at (4,4) {};

 \draw[-] (a0) edge (e0);
  \draw[-] (a1) edge (e1);
   \draw[-] (a2) edge (e2);
    \draw[-] (a3) edge (e3);
    \draw[-] (a4) edge (e4);

 \draw[-] (a0) edge (a4); 
 \draw[-] (b0) edge (b4); 
 \draw[-] (c0) edge (c4); 
 \draw[-] (d0) edge (d4); 
 \draw[-] (e0) edge (e4); 

\end{tikzpicture}


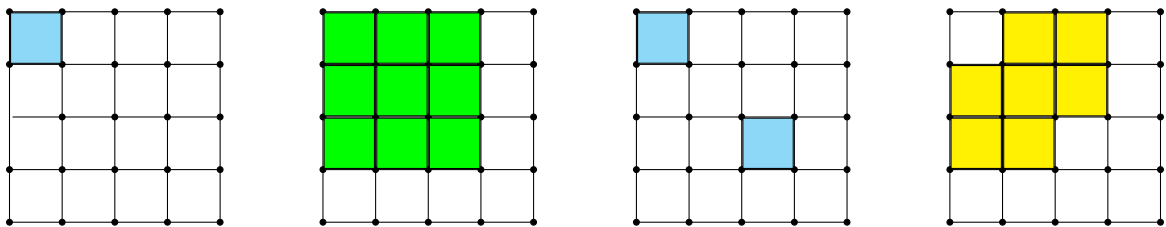
\captionof{figure}{The representations of $\dom(\pi)$, $\NW(\pi)$, $\Ess(\pi)$, and $L'(\pi)$. }\label{domrep}
\end{center}

\begin{thm}[{\cite[Proposition 3.3, Lemma 3.10]{Fulton92}}] \label{fultonschubert}
The matrix Schubert variety $\overline{X_{\pi}}$ is an affine variety of dimension $N^2-|D(\pi)|$. It can be defined as a scheme by the equations $r_{\mathcal{M}}({a,b}) \leq r_{\pi} (a,b)$ for all $(a,b) \in \Ess(\pi)$. 
\end{thm}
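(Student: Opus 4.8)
The statement splits into a dimension count and a scheme-theoretic description, which I would prove by largely independent arguments. For the dimension, I would compute $\dim\overline{X_{\pi}}$ as the dimension of the dense orbit $B_{-}\pi B_{+}$ via the orbit--stabilizer theorem,
\[
\dim\overline{X_{\pi}}=\dim(B_{-}\times B_{+})-\dim\Stab(\pi),
\]
using $\dim(B_{-}\times B_{+})=N(N+1)$. The stabilizer of $\pi$ consists of pairs $(M_{-},M_{+})$ with $M_{-}\pi M_{+}^{-1}=\pi$, i.e. $M_{-}=\pi M_{+}\pi^{-1}$, so it is isomorphic to $\{M_{+}\in B_{+}:\pi M_{+}\pi^{-1}\in B_{-}\}$. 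Since $(\pi M_{+}\pi^{-1})_{k\ell}=(M_{+})_{\pi^{-1}(k),\pi^{-1}(\ell)}$, the free entry $(M_{+})_{ab}$ with $a\le b$ survives exactly when its image sits weakly below the diagonal, that is when $\pi(a)\ge\pi(b)$. The $N$ diagonal entries always survive, and for $a<b$ the survivors are precisely the inversions $\pi(a)>\pi(b)$; as $|D(\pi)|$ equals the number of inversions of $\pi$, this gives $\dim\Stab(\pi)=N+|D(\pi)|$ and hence $\dim\overline{X_{\pi}}=N^{2}-|D(\pi)|$.

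For the set-theoretic description I would start from the orbit identity $M\in B_{-}\pi B_{+}\iff r_{M}(a,b)=r_{\pi}(a,b)$ for all $(a,b)$, already recorded above. Since each locus $\{r_{M}(a,b)\le k\}$ is closed (it is cut out by the vanishing of the $(k+1)$-minors of the northwest $a\times b$ block), rank is lower semicontinuous and the closure satisfies $\overline{X_{\pi}}\subseteq\{M:r_{M}(a,b)\le r_{\pi}(a,b)\ \forall (a,b)\}$. For the reverse inclusion I would use that this rank locus is a union of $B_{-}\times B_{+}$-orbits, each of the form $\{r_{M}=r_{w}\}$ for a partial permutation matrix $w$, and that it lies in the locus precisely when $w\le\pi$ in the rank (Bruhat) order; the standard fact that this order coincides with the orbit-closure order then identifies the rank locus with $\overline{X_{\pi}}$.

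The reduction to $\Ess(\pi)$ I would obtain from a monotonicity argument. Conditions at corners with $r_{\pi}(a,b)=\min(a,b)$ are vacuous, since any $a\times b$ matrix has rank at most $\min(a,b)$. For the remaining rank-deficient corners I would use that a one-step move east or south into an adjacent box of $D(\pi)$ leaves $r_{\pi}$ unchanged: if $(a,b+1)\in D(\pi)$ then $\pi(b+1)>a$, so no new $1$ enters the northwest block and $r_{\pi}(a,b+1)=r_{\pi}(a,b)$, while $r_{M}(a,b)\le r_{M}(a,b+1)$ by monotonicity of corner ranks; hence $r_{M}(a,b+1)\le r_{\pi}(a,b+1)$ forces $r_{M}(a,b)\le r_{\pi}(a,b)$, and symmetrically for south steps. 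Iterating pushes every rank condition southeast onto the southeast corners of $D(\pi)$, which is exactly $\Ess(\pi)$; the bookkeeping that every deficient corner is reached this way is the combinatorial heart of this step.

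The hardest part is upgrading this set-theoretic equality to a scheme-theoretic one, i.e. showing that the ideal generated by the $(r_{\pi}(a,b)+1)$-minors of the northwest $a\times b$ submatrices, over $(a,b)\in\Ess(\pi)$, is already radical (indeed prime). Primeness is immediate once one knows the underlying variety is irreducible, which it is as an orbit closure; but reducedness is genuinely deeper, and I would supply it either by a Gröbner degeneration of the essential minors to a squarefree monomial ideal, whose radicality descends to the original ideal, or by invoking the normality of the associated Schubert variety through the isomorphism with the flag-manifold picture. This reducedness input is the main obstacle, and it is precisely where one must go beyond elementary rank bookkeeping.
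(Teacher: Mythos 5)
The paper offers no proof of this theorem: it is imported verbatim from Fulton's paper (Proposition 3.3 and Lemma 3.10 there), so the only comparison available is with that source rather than with an internal argument. On its own terms your outline is correct and follows the standard route. The dimension count is complete and right: the stabilizer of $\pi$ in $B_{-}\times B_{+}$ is the graph of $M_{+}\mapsto\pi M_{+}\pi^{-1}$ restricted to $B_{+}\cap\pi^{-1}B_{-}\pi$, whose dimension is $N$ plus the number of inversions of $\pi$, and $|D(\pi)|=\ell(\pi)$ is exactly that number. The remaining three steps are correctly structured but each leans on a nontrivial input that you name rather than prove: (i) that the rank-function order on partial permutation matrices coincides with the $B_{-}\times B_{+}$-orbit-closure order, which is where Fulton does genuine inductive work; (ii) the combinatorial bookkeeping that every non-vacuous rank condition propagates southeast to a box of $\Ess(\pi)$ --- this is precisely Fulton's Lemma 3.10 and is slightly more delicate than the one-step east/south move you describe, since one must also dispose of rank-deficient positions lying outside $D(\pi)$; and (iii) primeness of the ideal generated by the essential minors. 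For (iii) you correctly isolate the hard point and list the two available routes; Fulton's own argument is the second one (reduction to Schubert varieties in the flag variety and their known normality and Cohen--Macaulayness), while the Gr\"obner degeneration to a squarefree initial ideal is the later Knutson--Miller proof. None of these deferrals is a flaw in the plan --- they are exactly the places where the cited literature carries the weight --- but a self-contained proof would have to supply them.
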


\begin{rem} \label{dimensionYpi} By the previous theorem, we observe that there exists no rank conditions imposed on the boxes which are not in $\NW(\pi)$. Thus these boxes are free in $\overline{X_{\pi}}$. Let $V_{\pi} \cong \mathbb{C}^{N^2 - |\NW(\pi)|}$ be the projection of the matrix Schubert variety $\overline{X_{\pi}} \subseteq \CC^{N \times N}$ onto these free boxes. Also, we define $Y_{\pi}$ as the projection onto the boxes of $L(\pi)$.  Note that one obtains $(a,b) \in \text{dom} (\pi)$ if and only if $r_\pi(a,b)=0$. Hence, $\overline{X_{\pi}} = Y_{\pi} \times V_{\pi}$ holds. In particular, by Theorem \ref{fultonschubert}, $$\dim (Y_{\pi}) = N^2- |D(\pi)| - N^2 - |\NW(\pi)| =|\NW(\pi)| - |D(\pi)| = |L'(\pi)|.$$
\end{rem}

\begin{ex}\label{firstexample}
The essential set for the permutation $\pi = [21 43] \in S_4$ consists of the boxes $(1,1)$ and $(3,3)$. Let $\mathcal{M}=(m_{ij}) \in \CC^{4 \times 4}$. First we note that $m_{11} =0$ since $(1,1) \in \dom(\pi)$. For the boxes in $L(\pi)$ one obtains the following inequality by Theorem \ref{fultonschubert}: \[ r_{\mathcal{M}}(3,3) = \rank(\mathcal{M}_{(3,3)}) =  \rank\left(
\begin{bmatrix}
    0       & m_{12} & m_{13} \\
    m_{21}       & m_{22} & m_{23} \\
    m_{31}       & m_{32} & m_{33} 
\end{bmatrix}
\right)
\leq 2.
\]
One obtains the ideal as generated by $I:=\det (\mathcal{M}_{(3,3)})$. In particular $\overline{X_{\pi}} \cong \mathbb{V}(I) \times \mathbb{C}^{7}$ and $\dim(Y_{\pi})=|L'(\pi)|=7$.
\end{ex}

\subsection{Edge cones of bipartite graphs}

In this section, we briefly introduce the construction of the toric varieties related to bipartite graphs as in \cite{binomialbook,rigidportakal}. We refer the reader to \cite{toricvarieties} for details on the toric varieties and the notations. In particular $N \cong \mathbb{Z}^n$ stands for a lattice and $M=\text{Hom}_{\mathbb{Z}}(N, \mathbb{Z})$ is its dual lattice. We denote their associated vector spaces as $N_{\mathbb{Q}}:=N \otimes_{\mathbb{Z}} \mathbb{Q}$ and $M_{\mathbb{Q}}:=M \otimes_{\mathbb{Z}} \mathbb{Q}$. \\

\noindent Let $G \subseteq K_{m,n}$ be a bipartite graph with edge set $E(G)$ and vertex set $V(G)$. One defines the edge ring as $$\text{Edr}(G):= \mathbb{C}[t_i t_j \ | \ (i,j) \in E(G)].$$ Consider the following morphism:
$$\varphi_G: \mathbb{C}[x_1,...,x_{|E(G)|}]\longrightarrow \text{Edr}(G)$$
$$x_{e} \mapsto t_i t_j \text{ with } e=(i,j).$$  

\noindent The kernel of this map is a toric ideal and it is called the \emph{edge ideal} of $G$. The affine normal \emph{toric variety associated to bipartite graph $G$} is $$\TV(G):= \Spec(\mathbb{C}[x_1,...,x_{|E(G)|}] \slash \ker \varphi_G ).$$ 

\noindent Let $e_{i}$ denote a canonical basis element of $\mathbb{Z}^{m} \times 0$ for $i=1, \ldots, m$ and  $f_{j}$ denote a canonical basis element of $0 \times \ZZ^n$ for $j=1,\ldots,n$. Set the lattices for the associated cones of the toric variety $\TV(G)$ as
\[N:= \ZZ^{m+n}/\overline{(1,-1)} \text{ and } M:=\ZZ^{m+n} \cap \overline{(1,-1)}^{\bot}\]
where $\overline{(1,-1)}:=\big\langle \sum_{i=1}^m e_i - \sum_{j=1}^n f_j \big\rangle$. We denote their associated vector spaces as $N_{\QQ}$ and $M_{\QQ}$.  In order to distinguish the elements of these vector spaces, we denote the ones in $N_{\QQ}$ by normal brackets and the ones in $M_{\QQ}$ by square brackets. For the same reason, we denote the canonical basis elements as $e_i \in N_{\QQ}$ and $e^i \in M_{\QQ}$. \\

\noindent Hence we obtain that the (dual) edge cone associated to $\TV(G)$ is $$\sigma_G^{\vee}=\cone(e^i + f^j \ | \ (i,j)\in E(G)) \subseteq M_{\QQ},$$ i.e.\ we have that $$\TV(G)= \Spec(\mathbb{C}[\sigma_G^{\vee} \cap M]).$$
We observe in Section \ref{torusactiononmsv} that the dual edge cone $\sigma_G^{\vee}$ is in fact isomorphic to the moment cone of a matrix Schubert variety. We use this fact in order to determine the complexity of the torus action on a matrix Schubert variety.

\begin{prop}[{\cite[Proposition 2.1, Lemma 2.17]{rigidportakal}}]\label{dimensionprop}
Let $G\subseteq K_{m,n}$ be a bipartite graph with $k$ connected components and $m+n$ vertices. Then the dimension of $\sigma_G^{\vee} \subset M_{\mathbb{R}}$ is $m+n-k$.
\end{prop}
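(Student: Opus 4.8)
The plan is to compute the dimension of $\sigma_G^{\vee}$ directly as the dimension of the $\QQ$-vector space spanned by its generating rays $\{e^i + f^j \mid (i,j) \in E(G)\}$ inside $M_{\QQ}$, and then to identify this span combinatorially with a quantity governed by the connected components of $G$. Since $\sigma_G^{\vee}$ is a full-dimensional cone in the span of its generators, its dimension equals the rank of the integer matrix whose columns are the vectors $e^i + f^j$ over all edges. So the whole problem reduces to a rank computation for the (signed) incidence-type matrix of the bipartite graph $G$.

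The key steps I would carry out are as follows. First I would recall that for a graph on $m+n$ vertices, the incidence matrix has rank equal to $(m+n) - (\text{number of bipartite connected components})$; this is a standard fact, and because $G$ is bipartite every component is bipartite, so the rank of the full incidence-type matrix over $\QQ$ is exactly $m+n-k$. The vectors $e^i + f^j$ are precisely the columns of the incidence matrix of $G$ (with the convention that left vertices contribute $e^i$ and right vertices contribute $f^j$, all with sign $+$ since the graph is bipartite and we may orient every edge from its left endpoint to its right endpoint). The second step is to transfer this rank count from $\ZZ^{m+n}$ into the quotient/orthogonal-complement lattices $N$ and $M$ defined in the excerpt: the span of the generators already lies in $M_{\QQ} = \ZZ^{m+n} \cap \overline{(1,-1)}^{\perp}$, because each $e^i + f^j$ pairs to $1 - 1 = 0$ against $\overline{(1,-1)}$, so no dimension is lost when we pass to $M_{\QQ}$. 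Thus I would conclude $\di(\sigma_G^{\vee}) = \operatorname{rank}\{e^i + f^j\} = m+n-k$.

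For the incidence-rank fact itself, I would argue by induction on edges or by a spanning-forest argument: choosing a spanning tree in each of the $k$ connected components yields $(|V_c| - 1)$ linearly independent columns per component $c$, and these together are independent across components (disjoint supports), giving $\sum_c (|V_c| - 1) = (m+n) - k$ independent vectors. Conversely, any cycle in $G$ has even length (bipartiteness), and one checks that the alternating relation around a cycle does not produce a linear dependence among the $e^i + f^j$ (all coefficients appear with the same sign here, unlike the oriented incidence matrix), so adding back the non-tree edges cannot increase the rank beyond $m+n-k$. This yields the exact rank.

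The main obstacle I anticipate is the sign bookkeeping in the second direction. Because all generators $e^i + f^j$ carry a plus sign rather than the $\pm$ of a standard oriented incidence matrix, the usual ``cycle gives a dependence'' heuristic must be applied with care: a dependence among these vectors corresponds to an \emph{even} cycle traversed so that the signs alternate, which is exactly what bipartiteness guarantees. So the crux is to verify that over a bipartite graph the plus-sign incidence vectors have the same rank $m+n-k$ as the oriented ones, i.e. that bipartiteness is precisely the condition ensuring each independent cycle contributes one relation. I would make this precise by exhibiting, for each independent cycle, the explicit alternating $\pm 1$ combination that vanishes, and by a dimension count confirming there are no further relations.
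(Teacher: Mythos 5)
The paper states this proposition without proof---it is quoted as a standard fact about edge cones (cf.\ the cited literature on toric rings of graphs)---so there is no in-paper argument to compare against; your proof must be judged on its own. The approach is the right one and is essentially correct: $\di(\sigma_G^{\vee})$ is the dimension of the linear span of the generators $e^i+f^j$, these all lie in $\overline{(1,-1)}^{\perp}$ so nothing is lost in passing to $M_{\QQ}$, and the rank of the unsigned incidence matrix of a graph in which every component is bipartite is indeed $(m+n)-k$, by the spanning-forest lower bound plus the even-cycle relations. One sentence, however, says the opposite of what you need: you write that ``the alternating relation around a cycle does \emph{not} produce a linear dependence \dots so adding back the non-tree edges cannot increase the rank.'' To cap the rank at $m+n-k$ you need each non-tree edge to \emph{be} dependent on the forest columns, i.e.\ you need the even cycle to produce a genuine relation $\sum_t (-1)^t (e^{i_t}+f^{j_t})=0$, which telescopes to zero precisely because the cycle has even length. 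Your final paragraph states this correctly and promises the explicit alternating combination, so the slip is self-repairing, but as written the middle paragraph's logic is inverted and should be fixed. A slightly cleaner route to the same rank count: for a connected bipartite component the left kernel of the unsigned incidence matrix is exactly the line spanned by $\sum_{i\in U_1}e_i-\sum_{j\in U_2}f_j$ (any $x$ with $x_i+x_j=0$ on all edges is determined up to sign by connectivity), giving corank $1$ per component directly and avoiding the cycle-space bookkeeping altogether.
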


\noindent Our aim is to study the first order deformations $T^1_{\TV(G)}$ of the affine toric variety $\TV(G)$ by \cite{altmann96}. One can describe the $T^1_{\TV(G)}$ via understanding the two and three-dimensional faces of the edge cone $\sigma_G \subseteq N_{\QQ}$. We explain this technique briefly in Section \ref{deformationtheory}. We first introduce terminology and notation from graph theory  to describe the rays and faces of $\sigma_G$ in terms of subgraphs of $G$. In Section \ref{rigiditysection}, we will describe the rigidity of $\TV(G)$ in terms of graphs. 

\begin{rem}\label{notconnected}
 Note that if the bipartite graph $G$ is the disjoint union of two connected bipartite graph $G= G_1 \sqcup G_2$, then we have $\TV(G) = \TV(G_1) \times \TV(G_2)$. Thus for the remainder of this section,  we assume that $G \subseteq K_{m,n}$ is connected.
\end{rem}

\begin{defn}
A nonempty subset $A$ of $V(G)$ is called an \textit{independent set} if it contains no adjacent vertices. An independent set $A\subsetneq V(G)$ is called a \textit{maximal independent set} if there is no other independent set containing it. Let $U_1$ and $U_2$ be the disjoint sets of vertices of $G$. We say that an independent set is \textit{one-sided} if it is contained either in $U_1$ or  $U_2$. In a similar way, $A=A_1 \sqcup A_2$ is called a \textit{two-sided independent set} if $\emptyset \neq A_1 \subsetneq U_1$ and $ \emptyset \neq A_2 \subsetneq U_2$. 
\end{defn}

\begin{defn} 
The \textit{neighbor set} of $A \subseteq V(G)$ is defined as $$N(A) : = \{v \in V(G) \ | \ v \text{ is adjacent to some vertex in } A\}.$$ The \textit{supporting hyperplane} of the dual edge cone $\sigma^{\vee}_G \subseteq M_{\QQ}$ associated to an independent set $\emptyset \neq A $ is defined as $$\mathcal{H}_A : = \{x \in M_{\mathbb{Q}} \ | \ \sum_{v_i \in A} x_i = \sum_{v_i \in N(A)} x_i\}.$$

\end{defn}
\noindent Note that since no pair of vertices of an independent set $A$ is adjacent, we obtain that $A \cap N(A) =\emptyset$.
\begin{defn}
\begin{enumerate}
\item[]
\item  A subgraph of $G$ with the same vertex set as $G$ is called a \textit{spanning subgraph} of $G$.
\item Let $A\subseteq V(G)$ be a subset of the vertex set of $G$. The \textit{induced subgraph} of $A$ is defined as the subgraph of $G$ formed from the vertices of $A$ and all of the edges connecting pairs of these vertices. We denote it as $\G[A]$ and we adopt the convention $G[\emptyset]=\emptyset$.
\end{enumerate}
\end{defn}

\noindent Now, we characterize the independent sets resulting a facet of $\sigma_G^{\vee}$. 

\begin{defn}\label{assocgraph}
Let $G[[A]]$ denote the subgraph of $G$ associated to the independent set $A$ and defined as 
$$
  \left\{
                \begin{array}{ll}
                G[A \sqcup N(A)] \sqcup G[(U_1\backslash A) \sqcup (U_2\backslash N(A))], \text{if} A \subseteq U_1 \text{ is one-sided.}\\
                G[A \sqcup N(A)] \sqcup G[(U_2\backslash A) \sqcup (U_1\backslash N(A))], \text{if }A\subseteq U_2\text{ is one-sided.} \\
               G[A_1 \sqcup N(A_1)] \sqcup G[A_2 \sqcup N(A_2)], \text{if }A=A_1\sqcup A_2 \text{ is two-sided.} 
    
                \end{array}
              \right.  $$

\noindent We define the \emph{associated bipartite subgraph} $\G \{A\}  \subseteq G$ to the independent set $A$ as the spanning subgraph $G[[A]] \sqcup \big(V(G) \backslash V(G[[A]])\big)$.
\end{defn}

\noindent Finally, we define the \textit{first independent sets} $\mathcal{I}_G^{(1)}$ of $G$ as\\
 \[
   \mathcal{I}_G^{(1)}:= \left\{\begin{array}{lr}
      \text{Two-sided maximal independent sets and one-sided independent sets } U_i \backslash \{\bullet\}\\
 \text{ where their associated bipartite subgraph has two connected components.}
        \end{array}\right\} 
  \]
Note that Definition \ref{assocgraph} becomes less technical for first independent sets by \cite[Proposition 2.9, Lemma 2.10]{rigidportakal}. Namely we obtain:\\

$\G\{U_i \backslash \{\bullet\}\} = \G[U_i \backslash \{\bullet\} \sqcup U_j] \sqcup \{\bullet\}$, for a one-sided first independent set with $i \neq j$ and \\ 

$\G\{A\} =  G[A_1 \sqcup N(A_1)] \sqcup G[A_2 \sqcup N(A_2)]$, for a two-sided first independent set $A=A_1\sqcup A_2$.\\

\noindent Denote the set of extremal ray generators (i.e.\ 1-dimensional faces) of $\sigma_G$ by  $\sigma_G^{(1)}$. Recall that there is a bijective inclusion-reversing correspondence between the faces of $\sigma_G$ and the faces of $\sigma_G^{\vee}$. Given a face $\tau \preceq \sigma_G^{\vee}$, we define the dual face $\tau^{*}$ of $\tau$ as $\{ x \in \sigma_G^{\vee} \ | \ \langle x, u \rangle = 0 \text{ for all } u \in \tau\}$. In particular the facets of $\sigma_G^{\vee}$ are in bijection with the extremal rays of $\sigma_G$.
\begin{thm}[{\cite[Theorem 2.8]{rigidportakal}}]\label{11thm}
There is a one-to-one correspondence between the set of extremal generators $\sigma_G^{(1)}$ and the first independent set $\mathcal{I}_G^{(1)}$. In particular, the map is given as
\begin{eqnarray*}
 \Pi \colon \mathcal{I}_G^{(1)} &\longrightarrow &\sigma_G^{(1)} \\
A &\mapsto&  \mathfrak{a} := (\mathcal{H}_{A_i} \cap \sigma_G^{\vee})^{*}
\end{eqnarray*}
for a fixed $i \in \{1,2\}$ with $A_i \neq \emptyset$.
\end{thm}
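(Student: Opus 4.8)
The plan is to pass to the dual cone and characterize its facets, since the extremal generators of $\sigma_G$ are dual to the facets of $\sigma_G^{\vee}$. Because $G$ is connected, Proposition \ref{dimensionprop} gives $\dim \sigma_G^{\vee} = m+n-1 = \dim M_{\QQ}$, so $\sigma_G^{\vee}$ is full-dimensional and $\sigma_G$ is pointed. The standard inclusion-reversing face duality $\tau \mapsto \tau^{*} = \sigma_G \cap \tau^{\perp}$, under which $\dim \tau + \dim \tau^{*} = m+n-1$, then identifies the facets of $\sigma_G^{\vee}$ (those of dimension $m+n-2$) with the rays in $\sigma_G^{(1)}$. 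Thus it suffices to prove that the facets of $\sigma_G^{\vee}$ are exactly the faces $\mathcal{H}_{A_i} \cap \sigma_G^{\vee}$ for $A \in \Ione$, and that distinct independent sets produce distinct facets.

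First I would reduce the facet condition to a statement about connected components. Every facet $F$ is cut out by a supporting hyperplane with inner normal $u = (a,b) \in \sigma_G \subseteq N_{\QQ}$, and $F$ is generated by those $e^i + f^j$ lying on it, i.e.\ by the \emph{tight} edges with $a_i + b_j = 0$. Hence $F = \sigma_{G_u}^{\vee}$ for the spanning subgraph $G_u$ of tight edges, and by Proposition \ref{dimensionprop} one has $\dim F = m+n - (\#\text{components of } G_u)$, so $F$ is a facet if and only if $G_u$ has exactly two connected components. Next I would analyze $u$ on these components: within a component the relations $a_i + b_j = 0$ force $a$ to be constant on the $U_1$-part and $b = -a$ on the $U_2$-part; calling the two constants $\alpha_1 > \alpha_2$ (equality would make $u$ a multiple of $\overline{(1,-1)}$, not a facet normal), the nonnegativity of $u$ on the remaining edges of $G$ forbids edges between $U_1 \cap C_2$ and $U_2 \cap C_1$. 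After subtracting a suitable multiple of $\overline{(1,-1)}$ and scaling, $u$ becomes supported on a single component.

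With this in hand I would read off the independent set and classify. Setting $A := (U_1 \cap C_2) \sqcup (U_2 \cap C_1)$, the forbidden-edge condition makes $A$ independent, and connectedness of the two components yields $N(A) = (U_1 \cap C_1) \sqcup (U_2 \cap C_2) = V(G) \setminus A$; one then checks that $G_u$ coincides with $G[[A]]$ and that the normals of $\mathcal{H}_{A_1}$ and $\mathcal{H}_{A_2}$ differ precisely by $\overline{(1,-1)}$, so they define the same facet (this is exactly why the choice of $i$ is immaterial). When both components meet $U_1$ and $U_2$, the identity $N(A) = V(G)\setminus A$ says that $A$ is a two-sided maximal independent set; when one component is a single vertex, $A$ has the form $U_i \setminus \{\bullet\}$. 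In either case the facet condition is precisely that $G\{A\}$ has two connected components, i.e.\ $A \in \Ione$. Conversely, for any $A \in \Ione$ the same computation shows $\mathcal{H}_{A_i} \cap \sigma_G^{\vee} = \sigma_{G[[A]]}^{\vee}$ is a facet, so $\Gamma$ is well defined; injectivity follows since a facet determines its tight subgraph, hence the two components $C_1, C_2$ (distinguished by the orientation of $\sigma_G$), and thereby recovers $A$.

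The hard part will be the bookkeeping in the passage between the tight subgraph $G_u$ and the combinatorial data $G[[A]]$ and $\mathcal{H}_{A_i}$: verifying that the two one-sided hyperplanes attached to a two-sided $A$ agree modulo $\overline{(1,-1)}$, and — most delicately — handling the degenerate components consisting of a single vertex, which is exactly what separates the one-sided sets $U_i \setminus \{\bullet\}$ from the two-sided maximal ones and guarantees that the two families constituting $\Ione$ are disjoint and together exhaust every facet without overlap. By contrast, the equivalence ``facet $\Leftrightarrow$ two components'' is an immediate consequence of Proposition \ref{dimensionprop} once the face has been recognized as the edge cone of its tight subgraph.
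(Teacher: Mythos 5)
The paper does not actually prove this statement: it is imported verbatim as Theorem~3.12 of \cite{rigidportakal}, so there is no in-paper argument to compare against. Judged on its own, your plan is correct and follows the natural (and, as far as the cited source goes, essentially the intended) route: pass to $\sigma_G^\vee$, which is full-dimensional by Proposition~\ref{dimensionprop} since $G$ is connected, identify a facet with the dual edge cone of its tight spanning subgraph $G_u$, and use the dimension formula again to turn ``facet'' into ``$G_u$ has exactly two components.'' Your analysis of the normal $u$ is right: tightness forces $u$ to be constant (with opposite signs on the two sides) on each component, the inequality on the remaining edges forbids exactly the edges from $U_1\cap C_2$ to $U_2\cap C_1$, and this is what makes $A=(U_1\cap C_2)\sqcup(U_2\cap C_1)$ independent with $G_u=G[[A]]$; the single-vertex components are forced to sit on the correct side of the inequality and produce precisely the one-sided sets $U_i\setminus\{\bullet\}$, and the identity $-u_{A_1}+u_{A_2}=\overline{(1,-1)}$ (which uses $A\sqcup N(A)=V(G)$, i.e.\ maximality) is exactly why the choice of $i$ in $\mathcal{H}_{A_i}$ is immaterial. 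Two small points you should make explicit when writing this up: first, that $\mathcal{H}_{A_i}$ really is a supporting hyperplane, i.e.\ $\langle u_{A_i},e^i+f^j\rangle\in\{0,-1\}$ on all generators (this is where independence of $A$ enters); second, that the two-component condition in the definition of $\mathcal{I}_G^{(1)}$ is a genuine restriction even for two-sided \emph{maximal} independent sets (the pieces $G[A_i\sqcup N(A_i)]$ can be disconnected), so maximality alone does not place $A$ in $\mathcal{I}_G^{(1)}$ --- but in your forward direction this condition is supplied for free by the facet hypothesis, and in the converse it is part of the hypothesis $A\in\mathcal{I}_G^{(1)}$, so no gap results.
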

\begin{ex} We consider the bipartite graph $G \subset K_{2,2}$ obtained by removing one edge from the complete bipartite graph. The first independent set $\mathcal{I}_G^{(1)}$ for the graph $G$ is colored in green. The sets $\{1\}$ and $\{3\}$ are not in $\mathcal{I}_G^{(1)}$ since they are contained in the two-sided maximal independent set $\{1,3\}$. Hence their associated subgraph has three connected components. The cone $\sigma_G \subseteq N_{\QQ}$ is generated by $(1,0,0,0)$, $(0,0,1,0)$ and $(0,1,-1,0)$ corresponding respectively to the associated subgraphs.
\end{ex}

\begin{center}
\colorbox{white}{
\begin{minipage}[b]{0.27\linewidth}
\begin{tikzpicture}[baseline=1,scale=1.5,every path/.style={>=latex},every node/.style={draw,circle,fill=white,scale=0.6}]
  \node            (a) at (0,0)  {2};
  \node       [rectangle]     (b) at (1.5,0)  {4};
  \node            (c) at (0,1.5) {1};
  \node       [rectangle]     (d) at (1.5,1.5) {3};
\node [draw =none, fill=none,scale=1.6](e) at (0.75, -0.4) {$G$};

    \draw[-] (c) edge (b);
    \draw[-] (b) edge (a);
    \draw[-] (a) edge (d);
 
\end{tikzpicture}
\end{minipage}

\begin{minipage}[b]{0.24\linewidth}
\begin{tikzpicture}[baseline=1, scale=1.5,every path/.style={>=latex},every node/.style={draw,circle,fill=white,scale=0.6}]
  \node     [fill=green]          (a) at (0,0)  {2};
  \node        [rectangle]    (b) at (1.5,0)  {4};
  \node            (c) at (0,1.5) {1};
  \node       [rectangle]     (d) at (1.5,1.5) {3};
\node [draw =none, fill=none,scale=1.6](e) at (0.75, -0.4) {$\G\{ \{1\} \}$};

    \draw[-] (b) edge (a);
    \draw[-] (a) edge (d);

\end{tikzpicture}
\end{minipage}

\begin{minipage}[b]{0.24\linewidth}
\begin{tikzpicture}[baseline=1,scale=1.5,every path/.style={>=latex},every node/.style={draw,circle,fill=white,scale=0.6}]
  \node            (a) at (0,0)  {2};
  \node       [fill=green,rectangle]        (b) at (1.5,0)  {4};
  \node         (c) at (0,1.5) {1};
  \node      [rectangle]      (d) at (1.5,1.5) {3};
\node [draw =none, fill=none,scale=1.6](e) at (0.75, -0.4) {$\G\{\{3\} \}$};

    \draw[-] (c) edge (b);
    \draw[-] (b) edge (a);
 
\end{tikzpicture}
\end{minipage}

\begin{minipage}[b]{2.4\linewidth}
\begin{tikzpicture}[baseline=1,scale=1.5,every path/.style={>=latex},every node/.style={draw,circle,fill=white,scale=0.6}]
  \node            (a) at (0,0)  {2};
  \node           [rectangle] (b) at (1.5,0)  {4};
  \node       [fill=green]        (c) at (0,1.5) {1};
  \node       [fill=green,rectangle]        (d) at (1.5,1.5) {3};
\node [draw =none, fill=none,scale=1.6](e) at (0.75, -0.4) {$\G\{\{1,3\}\}$ };

    \draw[-] (c) edge (b);
    \draw[-] (a) edge (d);

\end{tikzpicture}
\end{minipage}
 }
\end{center}

\noindent The next result classifies $d$-dimensional faces of $\sigma_G$ via intersecting associated subgraphs related to first independent sets.

\begin{thm}[{\cite[Theorem 2.18]{rigidportakal}}]\label{facetheorem}
Let $ S \subseteq \mathcal{I}_G^{(1)}$ be a subset of $d$ first independent sets and let $\Pi$ be the bijection from Theorem \ref{11thm}. The extremal ray generators $\Pi(S)$ span a face of dimension $d$ if and only if the dimension of the dual edge cone of the spanning subgraph $\G[S]:=\bigcap_{A \in S} \G\{A\}$ is $m+n-d-1$, i.e.\ $\G[S]$ has $d+1$ connected components. In particular, the face is equal to $H_{\Val_S} \cap \sigma_G$ where $\Val_S$ is the degree sequence of the graph $\G[S]$ and $H_{\Val_S} = \{x \in N_{\QQ} \ | \ \langle \Val_S,x \rangle = 0 \}$ is the usual supporting hyperplane in $N_{\mathbb{Q}}$.
\end{thm}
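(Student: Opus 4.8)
The argument I would give rests on the order-reversing anti-isomorphism between the face lattices of $\sigma_G^{\vee}\subseteq M_{\mathbb{Q}}$ and of its dual $\sigma_G\subseteq N_{\mathbb{Q}}$. Since $G$ is connected, Proposition \ref{dimensionprop} gives $\dim\sigma_G^{\vee}=m+n-1=\dim M_{\mathbb{Q}}$, so $\sigma_G$ is pointed and full-dimensional and the assignment $\tau\mapsto\tau^{*}:=\tau^{\perp}\cap\sigma_G^{\vee}$ carries each $e$-dimensional face of $\sigma_G$ to an $(m+n-1-e)$-dimensional face of $\sigma_G^{\vee}$. The plan is to compute the dual face of the minimal face of $\sigma_G$ determined by $\{\Gamma(A):A\in S\}$ and to read its dimension off Proposition \ref{dimensionprop} applied to $\G[S]$.

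First I would record, from \hyperref[11thm]{Theorem \ref*{11thm}}, the facet dual to a single ray $\Gamma(A)$. Writing $\ell_A$ for the linear form $x\mapsto\sum_{v\in A}x_v-\sum_{v\in N(A)}x_v$ cutting out $\mathcal{H}_A$, a short check shows $\ell_A(e^i+f^j)=0$ exactly when the edge $(i,j)$ survives in $\G\{A\}$, while $\ell_A(e^i+f^j)\le 0$ for every edge of $G$. Hence $\mathcal{H}_A$ is a supporting hyperplane of $\sigma_G^{\vee}$, the facet $\mathcal{H}_A\cap\sigma_G^{\vee}$ is spanned precisely by the generators indexed by $E(\G\{A\})$, so it equals the dual edge cone $\sigma_{\G\{A\}}^{\vee}$, and $\Gamma(A)$ is the extremal ray of $\sigma_G$ orthogonal to it.

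The crux is then the identification
$$\Bigl(\textstyle\bigcap_{A\in S}\Gamma(A)^{\perp}\Bigr)\cap\sigma_G^{\vee}=\sigma_G^{\vee}\cap\bigcap_{A\in S}\mathcal{H}_A=\sigma_{\G[S]}^{\vee}.$$
The left equality is the definition of the dual face; for the right one, a generator $e^i+f^j$ lies in every $\mathcal{H}_A$ iff the edge $(i,j)$ lies in every $\G\{A\}$, i.e.\ in $E(\G[S])=\bigcap_{A\in S}E(\G\{A\})$, and a face of a polyhedral cone is generated by the generators it contains. This common face is exactly the dual face $F^{*}$ of the minimal face $F$ of $\sigma_G$ containing all $\Gamma(A)$, $A\in S$. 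Applying Proposition \ref{dimensionprop} to $\G[S]$ with $k'$ connected components gives $\dim F^{*}=\dim\sigma_{\G[S]}^{\vee}=m+n-k'$, whence $\dim F=(m+n-1)-(m+n-k')=k'-1$. Thus $\{\Gamma(A):A\in S\}$ spans a $d$-dimensional face precisely when $k'=d+1$, which is the stated equivalence.

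For the concluding identity I would exhibit the supporting hyperplane explicitly. The vector $\Val_S:=\sum_{(i,j)\in E(\G[S])}(e^i+f^j)$ is, componentwise, the degree sequence $(\deg_{\G[S]}(v))_{v\in V(G)}$; it lies in $M$ because both sides of the bipartition sum to $|E(\G[S])|$, making it orthogonal to $\overline{(1,-1)}$, and being a strictly positive combination of all generators of $\sigma_{\G[S]}^{\vee}$ it lies in $\Relint(\sigma_{\G[S]}^{\vee})=\Relint(F^{*})$. Since the face of $\sigma_G$ cut out by any relative-interior point of a dual face equals the image of that face under the anti-isomorphism, one gets $F=\Val_S^{\perp}\cap\sigma_G=H_{\Val_S}\cap\sigma_G$, as asserted. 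I expect the main obstacle to be precisely the crux identification: verifying cleanly that intersecting the facets dual to the chosen rays corresponds, generator by generator, to intersecting the associated spanning subgraphs $\G\{A\}$, and taking care that ``spanning a $d$-dimensional face'' is read as the dimension of the minimal face the rays determine, so that the component count of Proposition \ref{dimensionprop} transfers verbatim.
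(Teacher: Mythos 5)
The paper does not actually prove this statement: Theorem~\ref{facetheorem} is imported verbatim from \cite{rigidportakal} (Theorem 3.19) and used as a black box, so there is no in-paper proof to compare yours against. Judged on its own, your duality argument is sound and essentially complete. The chain facet $\mathcal{H}_A\cap\sigma_G^{\vee}=\sigma_{\G\{A\}}^{\vee}$, then $F^{*}=\bigcap_{A\in S}\Gamma(A)^{*}=\sigma_{\G[S]}^{\vee}$ (intersection of faces is the face generated by the common generators), then $\dim F=(m+n-1)-\dim\sigma_{\G[S]}^{\vee}=k'-1$ via Proposition~\ref{dimensionprop}, and finally $\Val_S\in\Relint(F^{*})$ giving $F=H_{\Val_S}\cap\sigma_G$, is exactly the right skeleton. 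Two points deserve to be made explicit. First, your ``short check'' that $\ell_A(e^i+f^j)=0$ precisely on $E(\G\{A\})$ genuinely uses that a two-sided first independent set is \emph{maximal}: only then does $U_1=A_1\sqcup N(A_2)$ and $U_2=A_2\sqcup N(A_1)$, ruling out edges $(i,j)$ with $i\notin A_1\cup N(A_2)$ and $j\notin A_2\cup N(A_1)$, which would satisfy $\ell_A=0$ without lying in $G[[A]]$. Second, your caveat about reading ``span a $d$-dimensional face'' as ``the minimal face containing the rays has dimension $d$'' is the correct reading here --- the paper itself later speaks of four rays spanning a non-simplicial three-dimensional face --- so the dimension count of the minimal face is exactly what the statement asserts; a literal reading (positive hull equals the face) would make the equivalence fail for non-simplicial faces, so it is worth stating the convention once rather than leaving it as a worry at the end.
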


\begin{ex}
All pairs of extremal rays of $\sigma_G$ generates a two-dimensional face of $\sigma_G$ since the intersection of all pairs of associated subgraphs has three connected components. In particular, the two-dimensional face generated by $(1,0,0,0)$ and $(0,1,-1,0)$, i.e.\ the edge cone of $\G\{\{1\}\} \cap~\G\{\{1,3\}\}$, is equal to $H_{[0,1,1,0]} \cap~\sigma_G$.
\end{ex}

\section{Torus action on matrix Schubert varieties in terms of graphs}\label{torusactiononmsv}

We are interested in the torus action on $Y_{\pi}$. This question has been first studied by Escobar and M\'{e}sz\'{a}ros in \cite{escobarmeszaros} where they characterize all toric varieties $Y_{\pi}$. We reformulate this classification in terms of graphs. Moreover we approach the question of determining the dimension of the torus acting on $Y_{\pi}$ from a perspective of $T$-varieties. These are normal varieties with effective torus action having not necessarily a dense torus orbit. They can be considered as the generalization of toric varieties with respect to the dimension of their torus action. For more details about $T$-varieties, we refer to \cite{pdivisor}, \cite{geometryofT}.
\begin{defn}
An affine normal variety $X$ is called a \textit{T-variety} of complexity-$d$ if it admits an effective $T$ torus action with $\text{dim}(X) - \text{dim}(T) = d$.
\end{defn}

\noindent The matrix Schubert varieties are normal varieties (see \cite{factschu}, Theorem 2.4.3.). The action of $B_{\_} \times B_{+}$ on $\overline{X_{\pi}}$ restricts to the action of $T^N \times T^N$, where $T^N \cong (\CC^*)^N$ is a diagonal matrix of size $N\times N$. Since this action is not effective ($(a I_N , a I_N ) . \mathcal{M}=  \mathcal{M}$), we consider the stabilizer $\Stab((\CC^*)^{2N})$ of this torus action and the action of the quotient $ T:=(\mathbb{C}^{*})^{2N}/ \Stab((\CC^*)^{2N})$ on the matrix Schubert variety $\overline{X_{\pi}}$.\\

\noindent Let $p$ be a general point in $Y_{\pi}$ which have $1$ in all boxes of $L(\pi)$ and $0$ in others. Then the closure of the torus orbit $\overline{(\CC^*)^{2N} . p}$ is the affine toric variety associated to the so-called $(\CC^*)^{2N}$-moment cone (or weight cone) of $Y_{\pi}$, denoted by $\Phi(Y_{\pi})$. One obtains that $\dim(\Phi(Y_{\pi})) = \dim(\overline{(\CC^*)^{2N} . p})$. Since $\overline{(\CC^*)^{2N} . p}$ and $Y_{\pi}$ are both irreducible, it suffices to examine their dimension for the complexity of the torus action on $Y_{\pi}$. Recall that the convex polyhedral cone generated by all weights of the torus action on $Y_{\pi}$ in $M_{\RR}$ (vector space spanned by the character lattice of considered torus) is called the weight cone. Here, the weight cone of the action can be expressed as $$ \Phi(Y_{\pi}) = \cone(e_i - f_j \ | \ (i,j)\in L(w))$$ where $e_i$ denotes the canonical basis for $\mathbb{R}^m \times {0}$ and $f_j$ denotes the canonical basis for ${0} \times \mathbb{R}^n$.  Note that this cone is GL-equivalent to a dual edge cone associated to a bipartite graph. Hence one can define a bipartite graph $G^{\pi} \subseteq K_{m,n}$ from a Rothe diagram $D(\pi)$ via the following bijection:

\begin{center}
$L(\pi) \longrightarrow E(G^{\pi})$ \\
$(a,b) \mapsto (a,b) $ 

\end{center}

\noindent where for $(a,b) \in E(G^{\pi})$, $a \in U_1$ and $b \in U_2$. Hence we obtain also the vertex set $V(G^{\pi})$. We denote the associated edge cone by $\sigma_{\pi}$. By Remark \ref{dimensionYpi}, we conclude the following.
\begin{prop}\label{complexity}
$Y_{\pi}$ is a T-variety of complexity $d$ with respect to the torus action $T$ if and only if $ \dim( \sigma^{\vee}_{\pi})= L'(\pi) -d$. 
\end{prop}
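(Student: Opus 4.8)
The plan is to unwind the definition of complexity and match it term by term with the quantity $|L'(\pi)| - \dim(\sigma^{\vee}_{\pi})$. Recall that the complexity of a $T$-variety $X$ under an effective torus action by $T$ is the codimension of a generic $T$-orbit, i.e.\ $\dim(X) - \dim(\overline{T \cdot p})$ for $p$ a general point. Since $T=(\CC^*)^{2N}/\Stab((\CC^*)^{2N})$ is obtained from $(\CC^*)^{2N}$ by quotienting out the stabilizer of the action, the $T$-orbits and the $(\CC^*)^{2N}$-orbits on $Y_{\pi}$ coincide set-theoretically and the induced $T$-action is effective. Hence for a general $p \in Y_{\pi}$ one has $\overline{T \cdot p} = \overline{(\CC^*)^{2N} \cdot p}$, and, effectivity forcing the generic orbit to have full dimension $\dim(T)$, the complexity equals $\dim(Y_{\pi}) - \dim(\overline{(\CC^*)^{2N} \cdot p})$.

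First I would record, as already observed in the text preceding the statement, that $\overline{(\CC^*)^{2N} \cdot p}$ is the affine toric variety attached to the moment cone $\Phi(Y_{\pi})$, so that $\dim(\overline{(\CC^*)^{2N} \cdot p}) = \dim(\Phi(Y_{\pi}))$. Next I would invoke the explicit description $\Phi(Y_{\pi}) = \cone(e_i - f_j \mid (i,j) \in L(\pi))$ together with the observation that this cone is GL-equivalent to the dual edge cone $\sigma^{\vee}_{\pi}$ of the bipartite graph $G^{\pi}$ attached to $\pi$ via the bijection $L(\pi) \leftrightarrow E(G^{\pi})$. Because a linear automorphism preserves dimension, this yields $\dim(\Phi(Y_{\pi})) = \dim(\sigma^{\vee}_{\pi})$.

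Finally I would substitute $\dim(Y_{\pi}) = |L'(\pi)|$ from \hyperref[dimensionYpi]{Remark \ref*{dimensionYpi}}. Combining the three identities, the complexity reads
$$d = \dim(Y_{\pi}) - \dim(\overline{(\CC^*)^{2N} \cdot p}) = |L'(\pi)| - \dim(\sigma^{\vee}_{\pi}),$$
and rearranging gives $\dim(\sigma^{\vee}_{\pi}) = |L'(\pi)| - d$, which is the claim; since every step is an equality, the equivalence holds in both directions.

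The only genuine subtlety — and the step I would treat most carefully — is the passage from orbit to cone, namely the identity $\dim(\overline{(\CC^*)^{2N} \cdot p}) = \dim(\Phi(Y_{\pi}))$. This rests on the generic orbit closure being precisely the toric variety of the moment cone and on the moment map not collapsing dimension, which is exactly why the stabilizer must be removed to render the torus action effective. Once effectivity is in hand the generic orbit has dimension $\dim(T) = \dim(\Phi(Y_{\pi}))$, and the remainder is bookkeeping with the GL-equivalence and the dimension count of \hyperref[dimensionYpi]{Remark \ref*{dimensionYpi}}.
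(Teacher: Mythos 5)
Your argument is correct and follows exactly the route the paper takes: the proposition is stated there as an immediate consequence of the preceding discussion, which identifies $\dim(\overline{(\CC^*)^{2N}\cdot p})$ with $\dim(\Phi(Y_{\pi}))=\dim(\sigma^{\vee}_{\pi})$ via the GL-equivalence of the moment cone with the dual edge cone, and then subtracts from $\dim(Y_{\pi})=|L'(\pi)|$ as in Remark~\ref{dimensionYpi}. Your write-up simply makes explicit the bookkeeping the paper leaves implicit.
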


\begin{ex}
Let us consider the matrix Schubert variety $\overline{ X_{\pi}} \cong Y_{\pi} \times \CC^7$ for $\pi=[2143]$ in Example~\ref{firstexample}. The second figure represents $L(\pi)$ and the third figure represents the bipartite graph $\G^{\pi}$. For each box $(a,b) \in L(\pi)$, we construct an edge $(a,b) \in E(G^{\pi})$ with vertices $a \in U_1$ and $b \in U_2$. The dimension of the associated dual edge cone $\sigma^{\vee}_{\pi}$ is 5 and $|L'(\pi)| = 7$. Hence $Y_{\pi}$ is a $T$-variety of complexity 2 with respect to the effective torus action of $T \cong (\CC^*)^5$ and with a moment cone linearly equivalent to $\sigma_{\pi}^{\vee}$.
\vspace{0.5cm}
\begin{center}
\begin{tikzpicture}[baseline=1 cm,scale=0.625,every path/.style={>=latex},every node/.style={draw,circle,fill=black,scale=0.2}]
 \path[draw, fill=cyan!40,opacity=1] (0,3)--(1,3)--(1,4)--(0,4)--cycle;
 \path[draw, fill=cyan!40,opacity=1] (2,1)--(3,1)--(3,2)--(2,2)--cycle;

  \node            (a0) at (0,0)  {};
  \node            (b0) at (1,0)  {};
  \node            (c0) at (2,0) {};
  \node            (d0) at (3,0) {};
  \node            (e0) at (4,0) {};

  \node            (a1) at (0,1)  {};
  \node            (b1) at (1,1)  {};
  \node            (c1) at (2,1) {};
  \node            (d1) at (3,1) {};
  \node            (e1) at (4,1) {};

  \node            (a2) at (0,2)  {};
  \node            (b2) at (1,2)  {};
  \node            (c2) at (2,2) {};
  \node            (d2) at (3,2) {};
  \node            (e2) at (4,2) {};

  \node            (a3) at (0,3)  {};
  \node            (b3) at (1,3)  {};
  \node            (c3) at (2,3) {};
  \node            (d3) at (3,3) {};
  \node            (e3) at (4,3) {};

  \node            (a4) at (0,4)  {};
  \node            (b4) at (1,4)  {};
  \node            (c4) at (2,4) {};
  \node            (d4) at (3,4) {};
  \node            (e4) at (4,4) {};

  \node            (p1) at (0.5,2.5) {};
  \node            (p2) at (1.5,3.5) {};
  \node            (p3) at (2.5,0.5) {};
  \node            (p4) at (3.5,1.5) {};

\draw[-,red] (p1) edge (0.5,0);
\draw[-,red] (p1) edge (4,2.5);

\draw[-,red] (p2) edge (1.5,0);
\draw[-,red] (p2) edge (4,3.5);

\draw[-,red] (p3) edge (2.5,0);
\draw[-,red] (p3) edge (4,0.5);

\draw[-,red] (p4) edge (3.5,0);
\draw[-,red] (p4) edge (4,1.5);

 \draw[-] (a0) edge (e0);
  \draw[-] (a1) edge (e1);
   \draw[-] (a2) edge (e2);
    \draw[-] (a3) edge (e3);
    \draw[-] (a4) edge (e4);

 \draw[-] (a0) edge (a4); 
 \draw[-] (b0) edge (b4); 
 \draw[-] (c0) edge (c4); 
 \draw[-] (d0) edge (d4); 
 \draw[-] (e0) edge (e4); 

\end{tikzpicture}
\hspace{1cm}
\begin{tikzpicture}[baseline=1cm,scale=0.625,every path/.style={>=latex},every node/.style={draw,circle,fill=black,scale=0.2}]
 \path[draw, fill=LimeGreen!40,opacity=1] (0,1)--(2,1)--(3,1)--(3,2)--(3,4)--(1,4)--(1,3)--(0,3)--cycle;
  \node            (a0) at (0,0)  {};
  \node            (b0) at (1,0)  {};
  \node            (c0) at (2,0) {};
  \node            (d0) at (3,0) {};
  \node            (e0) at (4,0) {};

  \node            (a1) at (0,1)  {};
  \node            (b1) at (1,1)  {};
  \node            (c1) at (2,1) {};
  \node            (d1) at (3,1) {};
  \node            (e1) at (4,1) {};

  \node            (a2) at (0,2)  {};
  \node            (b2) at (1,2)  {};
  \node            (c2) at (2,2) {};
  \node            (d2) at (3,2) {};
  \node            (e2) at (4,2) {};

  \node            (a3) at (0,3)  {};
  \node            (b3) at (1,3)  {};
  \node            (c3) at (2,3) {};
  \node            (d3) at (3,3) {};
  \node            (e3) at (4,3) {};

  \node            (a4) at (0,4)  {};
  \node            (b4) at (1,4)  {};
  \node            (c4) at (2,4) {};
  \node            (d4) at (3,4) {};
  \node            (e4) at (4,4) {};

  \node  [fill=none,draw=none,scale=3.18]     (Ca2) at (2.5,1.5) {(3,3)};

  \node  [fill=none,draw=none,scale=3.15]     (C3a) at (0.5,2.5) {(2,1)};

  \node  [fill=none,draw=none,,scale=3.15]     (C4a) at (1.5,3.5) {(1,2)};

  \node  [fill=none,draw=none,scale=3.15]     (Ca5) at (1.5,2.5) {(2,2)};

  \node  [fill=none,draw=none,scale=3.15]     (Ca6) at (0.5,1.5) {(3,1)};

  \node  [fill=none,draw=none,scale=3.15]     (Ca7) at (1.5,1.5) {(3,2)};

  \node  [fill=none,draw=none,scale=3.18]     (Ca9) at (2.5,2.5) {(2,3)};

  \node  [fill=none,draw=none,scale=3.18]     (Ca10) at (2.5,3.5) {(1,3)};

 \draw[-] (a0) edge (e0);
  \draw[-] (a1) edge (e1);
   \draw[-] (a2) edge (e2);
    \draw[-] (a3) edge (e3);
    \draw[-] (a4) edge (e4);

 \draw[-] (a0) edge (a4); 
 \draw[-] (b0) edge (b4); 
 \draw[-] (c0) edge (c4); 
 \draw[-] (d0) edge (d4); 
 \draw[-] (e0) edge (e4); 

\end{tikzpicture}
\hspace{1cm}
\begin{tikzpicture}[baseline=1.3cm,scale=1,every path/.style={>=latex},every node/.style={draw,circle,fill=white,scale=0.6}]
  \node            (a2) at (0,1.5)  {2};
  \node            (a3) at (0,0.5)  {3};

\node       [rectangle]     (b7) at (2,0.5)  {3}; 
 \node       [rectangle]     (b6) at (2,1.5)  {2};
  \node            (a1) at (0,2.5) {1};
  \node       [rectangle]     (b5) at (2,2.5) {1};

    \draw[-] (a1) edge (b7);
  \draw[-] (a1) edge (b6);
  \draw[-] (a2) edge (b5);
  \draw[-] (a2) edge (b6);
    \draw[-] (a2) edge (b7);
    \draw[-] (a3) edge (b7);
    \draw[-] (a3) edge (b6);
    \draw[-] (a3) edge (b5);

\end{tikzpicture}
\end{center}

\end{ex}
\vspace{0.5cm}

\noindent For the complexity zero case, i.e.\ toric case, we present an alternative proof with edge cones.

\begin{thm}[{\cite[Theorem 3.4]{escobarmeszaros}}]\label{toricschu}
$Y_{\pi}$ is a toric variety if and only if $L'(\pi)$ consists of disjoint hooks not sharing a row or a column.
\end{thm}

\begin{proof}
By Proposition \ref{complexity}, we aim to characterize the case when $\dim(\sigma^{\vee}_{\pi}) = L'(\pi)$. Note that $L(\pi)$ has a skew shape. Assume that $L(\pi)$ consists of $k$ connected components with $m_i$ rows and $n_i$ columns for each $i \in[k]$. This means that we investigate the bipartite graph $G^{\pi}\subseteq K_{m,n}$ with $k$ connected bipartite graph components $G^{\pi}_i \subseteq K_{m_i,n_i}$. By Proposition \ref{dimensionprop}, the dimension of the cone $\dim(\sigma_{\pi}) $ is $m+n-k$. Since $L(\pi)$ has $k$ connected components, the components of $L'(\pi)$ for each $i \in [k]$ do not share a row or a column.    Therefore, we are left with proving the statement for a connected component $L_i (\pi)$ of $L(\pi)$. The dimension of the dual edge cone of $G^{\pi}_i$ is equal to $|L'_i(\pi)|$ if and only if $L'_i(\pi)$ has a hook shape. 
\end{proof}

\begin{ex}
Let $\pi = [2413] \in S_4$. The first figure illustrates the Rothe diagram $D(\pi)$. The green colored boxes are $L(\pi)$ and the purple colored boxes are $L'(\pi)$. The dimension of the associated bipartite graph and $|L'(\pi)|$ is three. Also, as seen in the last figure, $L'(\pi)$ has a hook shape. Thus, $Y_{[2413]}$ is a toric variety with respect to the effective torus action of $T \cong (\CC^*)^3$, in particular the cone over the Segre variety $\mathbb P^1 \times \mathbb P^1$.
\vspace{0.2cm}
\begin{center}
\begin{tikzpicture}[baseline=1cm, scale=0.625,every path/.style={>=latex},every node/.style={draw,circle,fill=black,scale=0.2}]
 \path[draw, fill=cyan!40,opacity=1] (0,3)--(2,3)--(2,4)--(0,4)--cycle;
 \path[draw, fill=cyan!40,opacity=1] (1,1)--(2,1)--(2,2)--(1,2)--cycle;
  \node            (a0) at (0,0)  {};
  \node            (b0) at (1,0)  {};
  \node            (c0) at (2,0) {};
  \node            (d0) at (3,0) {};
  \node            (e0) at (4,0) {};

  \node            (a1) at (0,1)  {};
  \node            (b1) at (1,1)  {};
  \node            (c1) at (2,1) {};
  \node            (d1) at (3,1) {};
  \node            (e1) at (4,1) {};

  \node            (a2) at (0,2)  {};
  \node            (b2) at (1,2)  {};
  \node            (c2) at (2,2) {};
  \node            (d2) at (3,2) {};
  \node            (e2) at (4,2) {};

  \node            (a3) at (0,3)  {};
  \node            (b3) at (1,3)  {};
  \node            (c3) at (2,3) {};
  \node            (d3) at (3,3) {};
  \node            (e3) at (4,3) {};

  \node            (a4) at (0,4)  {};
  \node            (b4) at (1,4)  {};
  \node            (c4) at (2,4) {};
  \node            (d4) at (3,4) {};
  \node            (e4) at (4,4) {};

  \node            (p1) at (0.5,2.5) {};
  \node            (p2) at (2.5,3.5) {};
  \node            (p3) at (1.5,0.5) {};
  \node            (p4) at (3.5,1.5) {};

\draw[-,red] (p1) edge (0.5,0);
\draw[-,red] (p1) edge (4,2.5);

\draw[-,red] (p2) edge (2.5,0);
\draw[-,red] (p2) edge (4,3.5);

\draw[-,red] (p3) edge (1.5,0);
\draw[-,red] (p3) edge (4,0.5);

\draw[-,red] (p4) edge (3.5,0);
\draw[-,red] (p4) edge (4,1.5);

 \draw[-] (a0) edge (e0);
  \draw[-] (a1) edge (e1);
   \draw[-] (a2) edge (e2);
    \draw[-] (a3) edge (e3);
    \draw[-] (a4) edge (e4);

 \draw[-] (a0) edge (a4); 
 \draw[-] (b0) edge (b4); 
 \draw[-] (c0) edge (c4); 
 \draw[-] (d0) edge (d4); 
 \draw[-] (e0) edge (e4); 

\end{tikzpicture}
\hspace{1cm}
\begin{tikzpicture}[baseline=1cm,scale=0.625,every path/.style={>=latex},every node/.style={draw,circle,fill=black,scale=0.2}]
 \path[draw, fill=LimeGreen!40,opacity=1] (0,1)--(2,1)--(2,3)--(0,3)--cycle;
  \node            (a0) at (0,0)  {};
  \node            (b0) at (1,0)  {};
  \node            (c0) at (2,0) {};
  \node            (d0) at (3,0) {};
  \node            (e0) at (4,0) {};

  \node            (a1) at (0,1)  {};
  \node            (b1) at (1,1)  {};
  \node            (c1) at (2,1) {};
  \node            (d1) at (3,1) {};
  \node            (e1) at (4,1) {};

  \node            (a2) at (0,2)  {};
  \node            (b2) at (1,2)  {};
  \node            (c2) at (2,2) {};
  \node            (d2) at (3,2) {};
  \node            (e2) at (4,2) {};

  \node            (a3) at (0,3)  {};
  \node            (b3) at (1,3)  {};
  \node            (c3) at (2,3) {};
  \node            (d3) at (3,3) {};
  \node            (e3) at (4,3) {};

  \node            (a4) at (0,4)  {};
  \node            (b4) at (1,4)  {};
  \node            (c4) at (2,4) {};
  \node            (d4) at (3,4) {};
  \node            (e4) at (4,4) {};

 \draw[-] (a0) edge (e0);
  \draw[-] (a1) edge (e1);
   \draw[-] (a2) edge (e2);
    \draw[-] (a3) edge (e3);
    \draw[-] (a4) edge (e4);

 \draw[-] (a0) edge (a4); 
 \draw[-] (b0) edge (b4); 
 \draw[-] (c0) edge (c4); 
 \draw[-] (d0) edge (d4); 
 \draw[-] (e0) edge (e4); 

\end{tikzpicture}
\hspace{1cm}
\begin{tikzpicture}[baseline=1.3cm,scale=1.3,every path/.style={>=latex},every node/.style={draw,circle,fill=white,scale=0.7}]
  \node            (a) at (0,0.5)  {2};
  \node       [rectangle]     (b) at (1.5,0.5)  {2};
  \node            (c) at (0,2) {1};
  \node         [rectangle]   (d) at (1.5,2) {1};

    \draw[-] (c) edge (b);
    \draw[-] (b) edge (a);
    \draw[-] (a) edge (d);
  \draw[-] (c) edge (d);
\end{tikzpicture}
\hspace{1cm}
\begin{tikzpicture}[baseline=1cm,scale=0.625,every path/.style={>=latex},every node/.style={draw,circle,fill=black,scale=0.2}]
 \path[draw, fill=Orchid!40,opacity=1] (0,1)--(1,1)--(1,2)--(2,2)--(2,3)--(0,3)--cycle;
  \node            (a0) at (0,0)  {};
  \node            (b0) at (1,0)  {};
  \node            (c0) at (2,0) {};
  \node            (d0) at (3,0) {};
  \node            (e0) at (4,0) {};

  \node            (a1) at (0,1)  {};
  \node            (b1) at (1,1)  {};
  \node            (c1) at (2,1) {};
  \node            (d1) at (3,1) {};
  \node            (e1) at (4,1) {};

  \node            (a2) at (0,2)  {};
  \node            (b2) at (1,2)  {};
  \node            (c2) at (2,2) {};
  \node            (d2) at (3,2) {};
  \node            (e2) at (4,2) {};

  \node            (a3) at (0,3)  {};
  \node            (b3) at (1,3)  {};
  \node            (c3) at (2,3) {};
  \node            (d3) at (3,3) {};
  \node            (e3) at (4,3) {};

  \node            (a4) at (0,4)  {};
  \node            (b4) at (1,4)  {};
  \node            (c4) at (2,4) {};
  \node            (d4) at (3,4) {};
  \node            (e4) at (4,4) {};

 \draw[-] (a0) edge (e0);
  \draw[-] (a1) edge (e1);
   \draw[-] (a2) edge (e2);
    \draw[-] (a3) edge (e3);
    \draw[-] (a4) edge (e4);

 \draw[-] (a0) edge (a4); 
 \draw[-] (b0) edge (b4); 
 \draw[-] (c0) edge (c4); 
 \draw[-] (d0) edge (d4); 
 \draw[-] (e0) edge (e4); 

\end{tikzpicture}

\end{center}
\end{ex}

\vspace{0.5cm}

\noindent Theorem \ref{toricschu} and its alternative proof give us the opportunity to study the first-order deformations of $Y_{\pi}$ in terms of edge cone $\sigma_{\pi}$ and Rothe diagram $D(\pi)$ in Section \ref{section4}.

\section{Rigidity of toric matrix Schubert varieties}\label{section4}

This section is devoted to the study of the detailed structure of the edge cone $\sigma_{\pi}$ for matrix Schubert varieties $\overline{X_{\pi}}$ where $Y_{\pi}=\TV(G^{\pi})$ is toric. Note that these matrix Schubert varieties are called toric matrix Schubert varieties in \cite{escobarmeszaros} and we adopt this convention. First, we explain briefly the combinatorial techniques for the first order deformations of toric varieties. By studying the first independent sets of $G^{\pi}$ and the two and three-dimensional faces of $\sigma_{\pi}$, we present the conditions for rigidity of toric matrix Schubert varieties. By Remark \ref{notconnected} and since we investigate rigidity, we can assume that $L(\pi)$ is connected. Throughout this section, the connected bipartite graph $G^{\pi} \subseteq K_{m,n}$ denotes the associated bipartite graph of $L(\pi)$ which was constructed in Section \ref{torusactiononmsv}.

\subsection{Deformations of toric varieties}\label{deformationtheory}

A deformation of an affine algebraic variety $X_0$ is a flat morphism $\pi \colon \mathcal{X} \longrightarrow S$ with $0 \in S$ such that $\pi^{-1}(0) = X_0$, i.e.\  we have the following
 commutative diagram.
\begin{center}
\begin{tikzpicture}[every node/.style={midway}]
\matrix[column sep={4em,between origins},
        row sep={2em}] at (0,0)
{ \node(X0)   {$X_0$}  ; & \node(X) {$\mathcal{X}$}; \\
  \node(0) {$0$};&      \node(S) {$S$};             \\};
\draw[->] (X0) -- (0) node[anchor=east]  {$$};
\draw[->] (X) -- (S) node[anchor=west]  {$\pi$};
\draw[right hook->] (X0) -- (X) node[anchor=north]  {$$};
\draw[right hook->] (0)   -- (S) node[anchor=south] {$$};
\end{tikzpicture}
\end{center}

\noindent The variety $\mathcal{X}$ is called the total space and $S$ is called the base space of the deformation. Let $\pi \colon \mathcal{X} \longrightarrow S $ and $\pi' \colon \mathcal{X}' \longrightarrow S$ be two deformations of $X_0$. We say that two deformations are isomorphic if there exists a map $\phi: \mathcal{X} \longrightarrow \mathcal{X}'$ over $S$ inducing the identity on $X_0$. Let $A$ be an Artin ring and let $S = \Spec(A)$. One has a contravariant functor $\Def_{X_0}$ such that $\Def_{X_0} (A)$ is the set of deformations of $X_0$ over $S$ modulo isomorphisms. 
\begin{defn}
The map $\pi$ is called a first order deformation of $X_0$ if $S=\Spec(\CC [\epsilon]/(\epsilon^2))$. We set {$T^1_{X_0}$}$ := \Def_{X_0}(\CC [\epsilon] /(\epsilon^2))$.
\end{defn}

\noindent The variety $X_0$ is called {\it rigid} if $T_{X_0}^{1} = 0$. This implies that a rigid variety $X_0$ has no nontrivial infinitesimal deformations. This means that every deformation $\pi \in \Def_{X_0}(A)$ over $S=\Spec(A)$ is trivial i.e.\ isomorphic to the trivial deformation $X_0 \times S \longrightarrow S$. \\

\noindent For the case where $X_0 = \Spec(\CC[\sigma^{\vee} \cap M])$ is an affine normal toric variety, we introduce the techniques which are developed in \cite{altmann96} in order to investigate the $\CC$-vector space $T_{X_0}^1$. The deformation space $T_{X_0}^1$ is multigraded by the lattice elements of $M$, i.e.\ $T_{X_0}^1 = \bigoplus_{R\in M} T^1_{X_0}(-R)$. We first set some definitions in order to describe the homogeneous part $T^1_{X_0}(-R)$.

\begin{defn}
Let us call $R\in M$ a deformation degree and let $\sigma\subseteq N_{\QQ}$ be generated by the extremal ray generators $a_1,\ldots,a_k$. We define the following affine space \[[R=1]:=\{a \in N_\mathbb{Q} \ | \  \langle R, a \rangle =1\} \subseteq N_{\mathbb{Q}}.\]
The cross-cut of $\sigma$ in degree $R$ is the polyhedron {$Q(R)$}$:=\sigma \cap [R=1]$ in the assigned vector space $[R=0]:=\{a \in N_\mathbb{Q} \ | \  \langle R, a \rangle =0\} \subseteq N_{\mathbb{Q}}$.
\end{defn} 
The cross-cut $Q(R)$ has the cone of unbounded directions $Q(R)^{\infty} = \sigma \cap [R=0]$ and the minimal convex compact part $Q(R)^{c}$ such that  $Q(R)=Q(R)^{c}+Q(R)^{\infty}$ holds. The compact part $Q(R)^{c}$ is generated by the vertices $\overline{a_i} = a_i /\langle R, a_i \rangle$ where $\langle R, a_i \rangle \geq 1$. Note that $\overline{a_i}$ is a lattice vertex in $Q(R)$ if $\langle R, a_i \rangle =1$.

\begin{defn}

\begin{itemize}
\item[(i)] Let $d^1,\hdots,d^{\alpha} \in R^{\bot} \subset N_{\QQ}$ be the compact edges of $Q(R)$. The vector $\bar{\epsilon} \in \{0, \pm 1\}^{\alpha}$ is called a sign vector assigned to each two-dimensional compact face $\epsilon$ of $Q(R)$ defined as  
\[
    \overline{\epsilon_i}=\left\{
                \begin{array}{ll}
                  \pm 1, \ \text{if} \ d^i \ \text{is an edge of} \ \epsilon\\
                  0\\
                \end{array}
              \right.  \]

\noindent such that $\sum_{i \in [\alpha]} \overline{\epsilon_i} d^i=0$, i.e the oriented edges $\overline{\epsilon_i} d^i$ form a cycle along the edges of $\epsilon$. We choose one of both possibilities for the sign of $\overline{\epsilon}$.\\
\item[(ii)] For every deformation degree $R \in M$, the related vector space is defined as $$V(R)=\{\overline{t}=(t_1,\hdots,t_{\alpha}) \in \CC^{\alpha} :   \sum_{i \in [\alpha]} t_i \overline{\epsilon_i} d^i = 0, \text{ for every compact 2-face }\epsilon \preceq Q(R)\}.$$
\end{itemize}
\end{defn}
\noindent The toric variety $\TV(G)$ associated to a bipartite graph $G \subseteq K_{m,n}$ is smooth in codimension 2 (\cite{rigidportakal}, Theorem 4.5). Hence we introduce the formula for this special case.
\begin{thm}[{\cite[Corollary 2.7]{altmann96}}]\label{altmann96}
If the affine normal toric variety $X_0$ is smooth in codimension 2, then $T^1_{X_0}(-R)$ is contained in $V(R)/\mathbb{C}(1, \ldots, 1)$. Moreover, it is built by those $\bar{t}$'s  satisfying $t_{ij} = t_{jk}$ where $\overline{a_j}$ is a non-lattice common vertex in $Q(R)$ of the edges $d^{ij}=\overline{\overline{a_i} \ \overline{a_j}}$ and $d^{jk}=\overline{\overline{a_j} \ \overline{a_k}}$. 
\end{thm}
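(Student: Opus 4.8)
The plan is to obtain this as the degree-$R$ component of the general toric computation of $T^1$ from \cite{altmann96}, specialized under the codimension-$2$ smoothness hypothesis. Write $X_0 = \Spec \CC[\sigma^\vee \cap M]$ and fix a minimal generating set of the semigroup $\sigma^\vee \cap M$; this embeds $X_0 \hookrightarrow \CC^w$ with toric ideal $I$. The conormal exact sequence then presents $T^1_{X_0}$ as the cokernel of the natural map $\mathrm{Der}_{\CC}(\CC[x_1,\dots,x_w],\mathcal{O}_{X_0}) \to \mathrm{Hom}_{\mathcal{O}_{X_0}}(I/I^2,\mathcal{O}_{X_0})$. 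Because $I$ is generated by binomials this map is $M$-homogeneous, so I would fix a deformation degree $R \in M$ and compute the single graded piece $T^1_{X_0}(-R)$, reducing the whole problem to linear algebra in degree $-R$.

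The next step is to translate these graded pieces into the geometry of the crosscut $Q(R) = \sigma \cap [R=1]$. The vertices $\overline{a_i}$ of the compact part $Q(R)^c$ record exactly the rays with $\langle R, a_i\rangle \geq 1$, and the degree-$R$ part of $\mathrm{Hom}(I/I^2,\mathcal{O}_{X_0})$ is governed by how the semigroup sits relative to the hyperplanes $[R=0]$ and $[R=1]$. Unwinding the binomial relations in this degree, an infinitesimal deformation is an assignment of displacements $t_i$ to the compact edges $d^i$ of $Q(R)$, and the relations impose on each compact $2$-face $\epsilon$ precisely the closing-up condition $\sum_i t_i \overline{\epsilon_i} d^i = 0$. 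This is exactly the defining condition of $V(R)$, while the trivial deformations induced by the ambient torus (equivalently the uniform motion $\underline 1$ of $Q(R)$) account for the quotient by $\CC(\underline 1)$.

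I would then bring in the hypothesis that $X_0$ is smooth in codimension $2$, which means every two-dimensional face of $\sigma$ is smooth, i.e.\ spanned by part of a $\ZZ$-basis of $N$. In the general formula $T^1_{X_0}(-R)$ appears as a subquotient of $V_{\CC}(R)$ whose additional relations are supported on the singular $2$-faces; codimension-$2$ smoothness makes these relations vanish, so $T^1_{X_0}(-R)$ becomes an honest \emph{subspace} of $V_{\CC}(R)/\CC(\underline 1)$ rather than a proper subquotient. It remains to pin down which subspace. The cut is made at the vertices of $Q(R)$: if $\overline{a_j}$ is a \emph{lattice} vertex (so $\langle R, a_j\rangle = 1$) the two incident compact edges may be displaced independently, whereas if $\overline{a_j}$ is a \emph{non-lattice} vertex the semigroup element that would decouple them is missing, and the binomial relations through $\overline{a_j}$ force $t_{ij} = t_{jk}$ on the edges $d^{ij}, d^{jk}$ meeting at $\overline{a_j}$. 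Imposing these equalities on $V_{\CC}(R)/\CC(\underline 1)$ yields the asserted description.

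The main obstacle is the faithful bookkeeping in the second step: one must verify that in degree $-R$ the cokernel defining $T^1_{X_0}$ is \emph{exactly} the subspace of $V_{\CC}(R)/\CC(\underline 1)$ cut out by the non-lattice vertex equalities, with nothing extra surviving and nothing wrongly killed. This forces a careful accounting of which lattice points contribute to $\mathrm{Hom}(I/I^2,\mathcal{O}_{X_0})_{-R}$ against those lying in the image of the derivations, together with a precise check that codimension-$2$ smoothness removes exactly the terms that would otherwise enlarge the subquotient. Once this identification is secured, the lattice-versus-non-lattice dichotomy at the vertices of $Q(R)$ supplies the gluing conditions $t_{ij} = t_{jk}$ and the statement follows.
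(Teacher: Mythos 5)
This statement is not proved in the paper at all: it is imported verbatim as Corollary 2.7 of \cite{altmann96}, so there is no internal proof to compare against. Measured against the actual proof in the cited source, your outline does follow the right road --- minimal semigroup generators giving a binomial presentation, $M$-grading of the conormal sequence, reduction to the graded piece $T^1_{X_0}(-R)$, reinterpretation on the crosscut $Q(R)$, with $V(R)$ appearing as the space of Minkowski summands of the compact part and $\CC(\underline{1})$ as the trivial summand. This is essentially Altmann's strategy.

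However, as a proof the proposal has a genuine gap, and you name it yourself: the identification of the degree-$(-R)$ cokernel of $\mathrm{Der}_{\CC}(\CC[x_1,\dots,x_w],\mathcal{O}_{X_0}) \to \mathrm{Hom}(I/I^2,\mathcal{O}_{X_0})$ with the subspace of $V_{\CC}(R)/\CC(\underline{1})$ cut out by the non-lattice-vertex equalities is precisely the content of the theorem, and your text defers it as ``the main obstacle'' rather than carrying it out. Everything before that point is bookkeeping that any binomial ideal admits; everything after it (the dichotomy at vertices) is asserted by a heuristic --- ``the semigroup element that would decouple them is missing'' --- that is directionally correct but not an argument. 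In Altmann's treatment this step requires the explicit description of $\mathrm{Hom}(I/I^2,\mathcal{O}_{X_0})_{-R}$ via the sets of generators $r \in E$ with $\langle a_i, r\rangle$ below the relevant thresholds, and the codimension-$2$ smoothness hypothesis enters through a concrete computation on two-dimensional faces (each spanned by part of a $\ZZ$-basis of $N$), not merely by ``making relations vanish.'' In particular, why a lattice vertex ($\langle R, a_j\rangle = 1$) decouples the incident edges while a non-lattice vertex ($\langle R, a_j\rangle \geq 2$) forces $t_{ij} = t_{jk}$ is exactly the lattice-point count that must be done and is absent. So the proposal is a correct plan for reproving the cited result, but it is an outline of Altmann's proof with its core left open, not a proof.
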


\begin{rem} 
The following two cases of $Q(R)$ in Figure \ref{transferringexplained} will appear often while we study $T^1_{\TV(G)}(-R)$. Let us interpret these cases with the previous result. 
 
\begin{center}
\begin{tikzpicture}[baseline=1cm,scale=0.7,every path/.style={>=latex},every node/.style={draw,circle,fill=black,scale=0.2}]
  \node      [draw=black, circle, fill=black, scale=3]      (a) at (0,0)  {};
  \node        [draw=black, circle, fill=black, scale=3]      (b) at (-3,2)  {};
  \node       [draw=black, circle, fill=black, scale=3]        (d) at (3,2) {};
  \node        [draw=black, circle, fill=black, scale=3]       (f) at (0,4) {};

  \node      [draw=none, fill=white, scale=5]      (a1) at (-0.8,1.1)  {$d^1$};
  \node      [draw=none, fill=white, scale=5]      (a4) at (1,1.1)  {$d^5$};
  \node      [draw=none, fill=white, scale=5]      (a7) at (0.3,2)  {$d^3$};
  \node      [draw=none, fill=white, scale=5]      (a8) at (-1,3)  {$d^2$};
  \node      [draw=none, fill=white, scale=5]      (a9) at (1.2,2.7)  {$d^4$};

    \draw[->,green] (a) edge (b);

 \draw [<-,green] (f) edge (b);
 \draw [<-,green] (f) edge (d);
 \draw [->,green] (f) edge (a);

\draw [->,green] (a) edge (d);

\end{tikzpicture}
\hspace{2cm}
\begin{tikzpicture}[baseline=1,scale=0.7,every path/.style={>=latex},every node/.style={draw,circle,fill=black,scale=0.2}]
  \node      [draw=black, circle, fill=red, scale=3]      (a) at (0,0)  {$\overline{a_j}$};
  \node        [draw=black, circle, fill=black, scale=3]      (b) at (-2,2)  {};
  \node         [draw=black, circle, fill=black, scale=3]       (c) at (-3,-1.5) {};
  \node       [draw=black, circle, fill=black, scale=3]        (d) at (2,2) {};
  \node        [draw=black, circle, fill=black, scale=3]      (e) at (3,-1.5) {};

  \node      [draw=none, fill=white, scale=5]       (X) at (-1.5,1)  {$d^1$};
  \node        [draw=none, fill=white, scale=5]       (Y) at (-2.2,0.2)  {$d^2$};
  \node         [draw=none, fill=white, scale=5]      (Z) at (-1.5,-0.2) {$d^3$};
  \node      [draw=none, fill=white, scale=5]       (X1) at (1,0.5) {$d^4$};
  \node       [draw=none, fill=white, scale=5]       (Y1) at (2,0.8) {$d^5$};
  \node        [draw=none, fill=white, scale=5]     (Z1) at (1.7,-0.2) {$d^6$};

    \draw[->,green] (a) edge (b);

\draw [->,green] (b) edge (c);

\draw [<-,green] (a) edge (c);
\draw [->,green] (a) edge (d);

\draw [<-,green] (a) edge (e);

\draw [->,green] (d)  edge (e);

\end{tikzpicture}

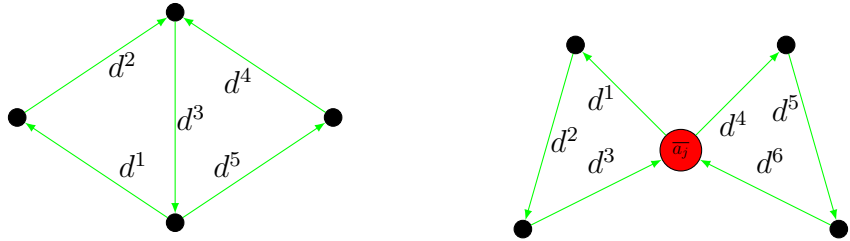
\captionof{figure}{Compact 2-faces sharing an edge or a non-lattice vertex in $Q(R)$}
\label{transferringexplained}
\end{center}

\vspace{0.5cm}

\noindent $\bullet$ Let $\epsilon^1, \epsilon^2 \preceq Q(R)$ be the compact 2-faces sharing the edge $d^3$. We choose the sign vectors $\overline{\epsilon^1} = (1,1,1,0,0)$ and $\overline{\epsilon^2} = (0,0,1,1,1)$. Suppose that $\overline{t} = (t_1, t_2, t_3, t_4, t_5) \in V(R)$. We observe that $t_1 = t_2 = t_3$ for 2-face $\epsilon^1$ and $t_3 = t_4 = t_5$ for 2-face $\epsilon^2$.\\
\noindent $\bullet$ Let $\epsilon^1, \epsilon^2 \preceq Q(R)$ be the compact 2-faces connected by the vertex $\overline{a_j}$. As in the previous case we obtain that $t_1 = t_2 = t_3$ and $t_4 = t_5 = t_6$. By Theorem \ref{altmann96}, if $a_j$ is a non-lattice vertex, then we obtain $t_3 = t_4$. 
\end{rem}

\subsection{Faces of the edge cone $\sigma_{\pi}$ of toric variety $Y_{\pi}$}\label{rigiditysection}

In order to study the rigidity of $Y_{\pi}=~\TV(\sigma_{\pi})$ with Theorem \ref{altmann96}, we investigate the face structure of the edge cone $\sigma_{\pi}$ more closely. We consider three types of first independent sets with following notations: the one-sided first independent sets $A = U_1\backslash \{\bullet\}$, $B= U_2 \backslash \{\bullet\}$ and two-sided (maximal) first independent sets $C = C_1 \sqcup C_2$. We label the essential boxes from the bottom of the diagram starting with $(x_1,y_1)$ to the top ending with $(x_{k+1}, y_{k+1})$ i.e.\ we have $x_{k+1} < \ldots < x_1$ and $y_1 < \ldots < y_{k+1}$. 

\begin{lem}\label{schulemma1}
For any permutation $\pi \in S_N$,
\begin{enumerate}
\item[\normalfont (1)] The one-sided first independent sets of $G^\pi$ are $U_i \backslash \{u_i\}$ for all $u_i \in U_i$ and for $i =1,2$. 
\item[\normalfont (2)] The two-sided first independent sets are the maximal two-sided independent sets of $G^{\pi}$.
\end{enumerate}
\end{lem}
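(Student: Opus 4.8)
The plan is to reduce everything to one structural feature of $G^{\pi}$: that it possesses a row adjacent to every column and a column adjacent to every row. Since we assume $L(\pi)$ is connected, $G^{\pi}$ is connected, so by \hyperref[toricschu]{Theorem \ref*{toricschu}} the set $L'(\pi)$, being a disjoint union of hooks, must be a single hook. Writing $m=|U_1|$ and $n=|U_2|$, I would combine \hyperref[dimensionYpi]{Remark \ref*{dimensionYpi}} with \hyperref[dimensionprop]{Proposition \ref*{dimensionprop}} to get $|L'(\pi)|=\dim Y_{\pi}=\dim \TV(G^{\pi})=m+n-1$. A hook spanning $h$ rows and $w$ columns consists of one full row and one full column and so has exactly $h+w-1$ boxes; since its rows and columns lie in $U_1$ and $U_2$ we have $h\le m$ and $w\le n$, and $h+w-1=m+n-1$ forces $h=m$, $w=n$. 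Thus the horizontal arm of the hook is a row $r^{\ast}\in U_1$ adjacent to all of $U_2$ and the vertical arm is a column $c^{\ast}\in U_2$ adjacent to all of $U_1$; I will call these the apex row and apex column. This is the only place the toric hypothesis enters, and I expect the main obstacle to be exactly the verification that the hook reaches every vertex of $G^{\pi}$ (and not merely the vertices touched by $L'(\pi)$); the dimension count above is what closes that gap.

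For part (1), recall that by definition a one-sided member of the first independent set must have the form $U_i\setminus\{u_i\}$ and have associated subgraph $G^{\pi}\{U_i\setminus\{u_i\}\}$ with two connected components, so it suffices to check that every such set does have exactly two components. Take $A=U_1\setminus\{u\}$, the case $i=2$ being symmetric. If $u\neq r^{\ast}$, then $r^{\ast}\in A$, whence $N(A)=U_2$ and $G^{\pi}\{A\}=G^{\pi}[(U_1\setminus\{u\})\sqcup U_2]\sqcup G^{\pi}[\{u\}]$; the apices survive in the first block, so it stays connected and we get two components. If $u=r^{\ast}$, then $N(A)$ may omit the columns adjacent only to $r^{\ast}$, but the apex column $c^{\ast}$ keeps all rows of $A$ in one cluster to which every column of $N(A)$ attaches, while the leftover block $G^{\pi}[\{r^{\ast}\}\sqcup(U_2\setminus N(A))]$ is a star centred at $r^{\ast}$; again exactly two components. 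Comparing with the definition, this identifies the one-sided first independent sets as precisely the sets $U_i\setminus\{u_i\}$ for $u_i\in U_i$ and $i=1,2$.

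For part (2), let $A=A_1\sqcup A_2$ be a maximal two-sided independent set. Maximality forces $N(A_1)=U_2\setminus A_2$ and $N(A_2)=U_1\setminus A_1$. Because $r^{\ast}$ is adjacent to every column it cannot lie in $A_1$, since otherwise $A_2\subseteq U_2\setminus N(A_1)=\emptyset$, contradicting two-sidedness; hence $r^{\ast}\in U_1\setminus A_1$, and symmetrically $c^{\ast}\in U_2\setminus A_2$. Then in $G^{\pi}\{A\}=G^{\pi}[A_1\sqcup N(A_1)]\sqcup G^{\pi}[A_2\sqcup N(A_2)]$ the first block contains $c^{\ast}$, which is adjacent to all of $A_1$ and thereby ties that block together, and the second block contains $r^{\ast}$, which does the same; each block is connected, so $G^{\pi}\{A\}$ has exactly two components and $A$ is a first independent set. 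Conversely, every two-sided first independent set is a maximal two-sided independent set by definition, so the two-sided members of the first independent set are exactly the maximal two-sided independent sets, as claimed.
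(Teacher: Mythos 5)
Your proof is correct, but it takes a genuinely different route from the paper's. The paper first argues that $L(\pi)$ is Ferrers-shaped and reads off the two-sided maximal independent sets explicitly from consecutive essential boxes, namely $C=\{x_i+1,\dots,m\}\sqcup\{y_{i-1}+1,\dots,n\}$; connectivity of the two blocks of $\G\{C\}$ then follows because each block is again a Ferrers shape. Part (1) is handled indirectly there: one observes that $U_i\setminus\{u_i\}$ cannot lie in a two-sided independent set, and an appeal to Proposition 3.10 of \cite{rigidportakal} rules out more than two components. You instead isolate a single structural fact --- the existence of a row adjacent to all of $U_2$ and a column adjacent to all of $U_1$, extracted from the dimension count $|L'(\pi)|=\dim Y_\pi=m+n-1$ applied to the hook --- and then verify the two-component condition directly in both parts by routing all connectivity through these two apex vertices. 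Your version is more self-contained (no appeal to Prop.\ 3.10, no need to argue that $L(\pi)$ itself is Ferrers-shaped, and you correctly flag and close the gap that the hook must touch every vertex of $G^\pi$, not just those met by $L'(\pi)$). What it does not deliver is the explicit coordinate description of the two-sided first independent sets in terms of $\Ess(\pi)$, which the paper's proof establishes en route and then reuses immediately in Lemma \ref{mainlemschu} and in the face computations later in Section 4; if your argument replaced the paper's, that description would have to be derived separately.
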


\begin{proof}
By Theorem \ref{toricschu}, $L'(\pi)$ is a hook. The boxes of $L(\pi)$ form a shape of a Ferrer diagram, i.e.\ we have $\lambda_1 \geq \ldots \geq \lambda_t$ where $\lambda_i$ denotes the number of boxes at $i$th row of $L(\pi)$.  Consider the smallest rectangle containing $L(\pi)$ of length $m$ and of width $n$. The removed edges of the bipartite graph $G^{\pi}\subseteq K_{m,n}$ are linked with the free boxes of $\overline{X_{\pi}}$ in the rectangle. Let $(x_i,y_i) \in \Ess(\pi)$, equivalently let $(x_i,y_i) \in E(G^{\pi})$.  Then one obtains naturally that there exists a two-sided maximal independent set $C = C_1 \sqcup C_2 = \{x_{i}+1, \ldots,m\} \sqcup \{y_{i-1}+1, \ldots, n\}$ where $(x_{i-1},y_{i-1}) \in \Ess(\pi)$ with $x_{i-1} > x_i$ and $y_{i-1} <y_{i}$. Then the neighbor sets are $N(C_1)=U_2 \backslash C_2 =\{1, \ldots, y_{i-1}\}$ and $N(C_2) = U_1 \backslash C_1 = \{1, \ldots , x_{i}\}$. Therefore, the boxes for the induced subgraphs $\G[C_1 \sqcup N(C_1)]$ and $\G[C_2 \sqcup N(C_2)]$ also form a shape of a Ferrer diagram and $\G\{C\}$ has two connected components.  In particular, $U_i \backslash \{u_i\}$ cannot be contained in a two-sided independent set. Suppose that $\G\{U_i \backslash \{u_i\}\}$ has more than three components. Then as in \cite[Proposition 2.13]{rigidportakal}, there exist two-sided first independent sets $C^{i} \in \Ione$ such that $\bigsqcup C_1^{i} = U_i \backslash \{u_i\}$  which is not possible. 
\end{proof}

\begin{lem}\label{mainlemschu}
There exist $k$ two-sided first independent sets of $G^{\pi}$ with $|\Ess(\pi) | = k+1$. Moreover, if $k \geq 2$ and, $C$ and $C'$ are two-sided first independent sets of $G^{\pi}$, then $C_1 \subsetneq C'_1$ and $C'_2 \subsetneq C_2$. 
\end{lem}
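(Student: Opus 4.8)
The plan is to derive both assertions from the explicit shape of the maximal two-sided independent sets that already surfaced in the proof of Lemma~\ref{schulemma1}, so the work is combinatorial bookkeeping on the staircase $L(\pi)$ rather than new cone theory. By Theorem~\ref{toricschu} the boxes of $L(\pi)$ form a Ferrer diagram, say $\lambda_1 \geq \cdots \geq \lambda_m$ with $(a,b) \in E(G^\pi)$ iff $b \leq \lambda_a$, and by Lemma~\ref{schulemma1}(2) the two-sided first independent sets are exactly the maximal two-sided independent sets of $G^\pi$. Under this dictionary a two-sided independent set $A = A_1 \sqcup A_2$ is a rectangle $A_1 \times A_2$ inside the complement of the staircase, i.e.\ $\min(A_2) > \lambda_a$ for every $a \in A_1$.

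First I would record that, because $\lambda$ is weakly decreasing, maximality pins $A$ down completely: one must have $A_1 = \{a : \lambda_a < \min(A_2)\}$ and $A_2 = \{b : b > \lambda_{\min(A_1)}\}$, so $A$ is governed by a single threshold and takes the interval form $\{s, \ldots, m\} \sqcup \{\lambda_s + 1, \ldots, n\}$ used in Lemma~\ref{schulemma1}. Such a rectangle cannot be enlarged by adding row $s-1$ precisely when $\lambda_{s-1} > \lambda_s$, that is, when $s-1$ is an outer (south-east) corner of the staircase; and it stays genuinely two-sided exactly when both parts remain nonempty and proper.

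Next I would identify $\Ess(\pi)$ with these south-east corners, i.e.\ the rows $r$ with $\lambda_r > \lambda_{r+1}$, and order them from the bottom as $(x_1, y_1), \ldots, (x_{k+1}, y_{k+1})$ with $x_1 > \cdots > x_{k+1}$ and $y_1 < \cdots < y_{k+1}$; this is where $|\Ess(\pi)| = k+1$ enters. The admissible thresholds are then $s = x_i + 1$ for $i = 1, \ldots, k+1$, but $i = 1$ forces $A_1 = \emptyset$ and so fails to be two-sided, leaving exactly the $k$ sets $C^{(i)} = \{x_i+1, \ldots, m\} \sqcup \{y_{i-1}+1, \ldots, n\}$ for $i = 2, \ldots, k+1$. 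This establishes the first claim.

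For the nesting (relevant only when $k \geq 2$, so that there are at least two sets to compare), everything can be read off the formulas: for indices $i < j$ one has $x_i > x_j$ and $y_{i-1} < y_{j-1}$, whence $C^{(i)}_1 \subsetneq C^{(j)}_1$ and $C^{(j)}_2 \subsetneq C^{(i)}_2$; relabelling the member with the smaller first part as $C$ yields $C_1 \subsetneq C'_1$ and $C'_2 \subsetneq C_2$. The only step that is not pure reading-off, and thus the main obstacle, is the \emph{exhaustiveness and non-redundancy} behind the count: verifying that every maximal two-sided independent set really has the interval form above (so that none are overlooked or counted twice) and that the two-sidedness constraints remove exactly the boundary index $i=1$. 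I would settle this using the monotonicity of $\lambda$ — which shows that adding any admissible vertex keeps independence, forcing the closed-interval form — together with the component-counting argument of \cite{rigidportakal}, Proposition~3.10 already invoked in Lemma~\ref{schulemma1}.
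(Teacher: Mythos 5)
Your proof is correct and takes essentially the same route as the paper: both read off the explicit interval form $\{x_i+1,\ldots,m\}\sqcup\{y_{i-1}+1,\ldots,n\}$ of the maximal two-sided independent sets from the staircase shape of $L(\pi)$ set up in Lemma~\ref{schulemma1}, and then deduce the count $k$ and the nesting from the ordering of the essential entries. Your threshold/maximality argument additionally makes explicit the exhaustiveness step (that \emph{every} maximal two-sided independent set has this interval form), which the paper's terser proof leaves implicit.
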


\begin{proof}
Consider again the smallest rectangle containing $L(\pi)$ of a length $m$ and of a width $n$. If there exists only one essential set of $\pi$, then $G^{\pi} = K_{m,n}$. Assume that there are more than one essential box. Let $(x_{j}, y_{j})$ and $(x_i, y_i)$ be two essential boxes with $j<i$, $x_{j}>x_i$ and $y_{j}<y_i$. By Lemma \ref{schulemma1}, we obtain two first independent sets $C = \{x_{i}+1, \ldots,m\} \sqcup \{y_{i-1} +1, \ldots, n\}$ and $C' = \{x_{j}+1, \ldots,m\} \sqcup \{y_{j-1}+1, \ldots, n\}$ of $G^{\pi}$. We infer that $C_1 \subsetneq C'_1$ and $C'_2 \subsetneq C_2$. 
\end{proof}

\begin{ex}\label{rigidexample}

The boxes of $L(\pi)$ for the toric variety $Y_{\pi}$ is presented in Figure \ref{youngtableauxfig}. The blue boxes are removed edges between some vertex sets $C_1 \subset U_1$ and $C_2 \subset U_2$. We observe  that $C:=~C_1 \sqcup C_2$ is a maximal independent set. In particular, the orange color represents the edges of the induced subgraph $\G[C_1 \sqcup N(C_1)]$ and the purple color represents the edges of the induced subgraph $\G[C_2 \sqcup N(C_2)]$. The crossed boxes are the boxes of the essential set $\Ess(\pi)$. The boxes with a dot form the shape of a hook and these are the boxes of $L'(\pi)$.
\begin{center}
\ytableausetup{smalltableaux}
\begin{ytableau}
*(white) \bullet & *(white)  \bullet &*(white) \bullet  &*(white) \bullet  & *(white) \bullet  &*(white) \bullet &*(white)\bullet  & *(white)\bullet   &*(white) \bullet &*(Orchid) \bullet & *(Orchid) \bullet  &*(Orchid)\bullet  &*(Orchid)\bullet  & *(Orchid)\bullet   &*(Orchid)\bullet  & *(Orchid)\bullet   &*(Orchid) \bullet  \\
*(white)  \bullet & *(white)  &*(white) &*(white) & *(white)  &*(white) &*(white) & *(white)  &*(white) &*(Orchid) &*(Orchid)  &*(Orchid)&*(Orchid)  &*(Orchid)  &*(Orchid)&*(Orchid)  &*(Orchid)   \\
*(white) \bullet  & *(white)  &*(white) &*(white) & *(white)  &*(white) &*(white) & *(white)  &*(white) &*(Orchid) &*(Orchid)  &*(Orchid)&*(Orchid) &*(Orchid)&*(Orchid)&*(Orchid)  &*(Orchid) \times  \\
*(white) \bullet & *(white)  &*(white) &*(white) & *(white)  &*(white) &*(white) & *(white)  &*(white) &*(Orchid) &*(Orchid)  &*(Orchid)&*(Orchid)  &*(Orchid)   \\
*(white) \bullet & *(white)  &*(white) &*(white) & *(white)  &*(white) &*(white) & *(white)  &*(white) &*(Orchid) &*(Orchid)  &*(Orchid)&*(Orchid)  &*(Orchid) \times \\
*(white)  \bullet & *(white)  &*(white) &*(white) & *(white)  &*(white) &*(white) & *(white)  &*(white) & *(Orchid)  &*(Orchid)  \\
*(white) \bullet & *(white)  &*(white) &*(white) & *(white)  &*(white) &*(white) & *(white)  &*(white)  & *(Orchid)  &*(Orchid)\times \\
*(BurntOrange)  \bullet & *(BurntOrange) &*(BurntOrange) &*(BurntOrange) & *(BurntOrange) &*(BurntOrange) &*(BurntOrange) & *(BurntOrange)  &*(BurntOrange)& *(cyan!40)  &*(cyan!40) &*(cyan!40) & *(cyan!40) &*(cyan!40)&*(cyan!40) & *(cyan!40) &*(cyan!40)\\
*(BurntOrange)  \bullet & *(BurntOrange) &*(BurntOrange) &*(BurntOrange) & *(BurntOrange) &*(BurntOrange) &*(BurntOrange) & *(BurntOrange)  &*(BurntOrange) &  *(cyan!40)  &*(cyan!40) &*(cyan!40) & *(cyan!40) &*(cyan!40)&*(cyan!40) & *(cyan!40) &*(cyan!40)\\
*(BurntOrange)  \bullet & *(BurntOrange) &*(BurntOrange) &*(BurntOrange) & *(BurntOrange) &*(BurntOrange) &*(BurntOrange) & *(BurntOrange)  &*(BurntOrange) \times & *(cyan!40)  &*(cyan!40) &*(cyan!40) & *(cyan!40) &*(cyan!40)&*(cyan!40) & *(cyan!40) &*(cyan!40)\\
*(BurntOrange)  \bullet & *(BurntOrange) &*(BurntOrange) &*(BurntOrange) & *(BurntOrange) &*(BurntOrange) & \none  &\none &\none & *(cyan!40) &*(cyan!40)&*(cyan!40) & *(cyan!40) &*(cyan!40) & *(cyan!40)  &*(cyan!40) &*(cyan!40) \\
*(BurntOrange)  \bullet & *(BurntOrange) &*(BurntOrange) &*(BurntOrange) & *(BurntOrange) &*(BurntOrange)\times & \none  & \none &\none & *(cyan!40) &*(cyan!40)&*(cyan!40) & *(cyan!40)  &*(cyan!40) & *(cyan!40)  &*(cyan!40) &*(cyan!40)\\
*(BurntOrange)  \bullet & *(BurntOrange) &*(BurntOrange) & \none  & \none &\none & \none  & \none &\none & *(cyan!40) &*(cyan!40)&*(cyan!40) & *(cyan!40) &*(cyan!40)& *(cyan!40)  &*(cyan!40) &*(cyan!40)\\
*(BurntOrange)  \bullet & *(BurntOrange) &*(BurntOrange)  & \none  & \none &\none  & \none  & \none &\none & *(cyan!40) &*(cyan!40)&*(cyan!40) & *(cyan!40) &*(cyan!40)& *(cyan!40)  &*(cyan!40) &*(cyan!40)\\
*(BurntOrange)  \bullet & *(BurntOrange)& *(BurntOrange)& \none& \none  & \none   & \none  & \none &\none & *(cyan!40) &*(cyan!40)&*(cyan!40) & *(cyan!40) &*(cyan!40)& *(cyan!40)  &*(cyan!40) &*(cyan!40)\\
*(BurntOrange)  \bullet & *(BurntOrange) & *(BurntOrange) \times & \none & \none  &\none  & \none  & \none &\none & *(cyan!40) &*(cyan!40)&*(cyan!40) & *(cyan!40) &*(cyan!40)& *(cyan!40)  &*(cyan!40) &*(cyan!40)
\end{ytableau}
\end{center}


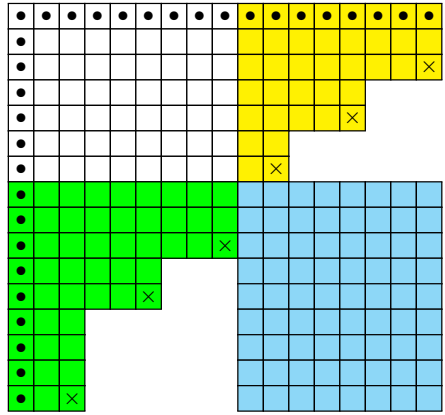
\captionof{figure}{A representative figure of a first independent set of $C \in \mathcal{I}^{(1)}_{G^\pi}$ and $\G\{C\}$ for a matrix Schubert variety $\overline{X_{\pi}}$}\label{youngtableauxfig}
\end{ex}

\noindent Let us first identify the cases where there is one or there are two essential boxes. 
\begin{lem}\label{eskirigids}
Let $G^{\pi} \subseteq K_{m,n}$ be the associated connected bipartite graph to the toric variety $Y_{\pi}$.
\begin{enumerate}
\item[\normalfont (1)] If $|\Ess(\pi)| = 1$, then the toric variety $Y_{\pi}$ is isomorphic to $\TV(K_{m.n})$, i.e.\ the cone over the Segre variety $\mathbb P^{m-1} \times \mathbb P^{n-1}$. In particular, $Y_{\pi}$ is rigid if $m \neq 2$ and $n \neq 2$. 
\item[\normalfont (2)] If $|\Ess(\pi)| =2$, then the toric variety $Y_{\pi}$ is rigid if and only if $|C_1|\neq 1$ and $|C_2| \neq n-2$ or $|C_1|\neq m-2$ and $|C_2| \neq 1$.
\end{enumerate}
\end{lem}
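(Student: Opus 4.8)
The plan is to decide rigidity through the first-order deformation module, using Altmann's description in \hyperref[altmann96]{Theorem \ref*{altmann96}} together with the face dictionary built in \hyperref[schulemma1]{Lemma \ref*{schulemma1}}, \hyperref[mainlemschu]{Lemma \ref*{mainlemschu}} and \hyperref[facetheorem]{Theorem \ref*{facetheorem}}. Since $\TV(G^\pi)$ is smooth in codimension two, $T^1_{Y_\pi}=\bigoplus_R T^1_{Y_\pi}(-R)$ and each graded piece is read off from the crosscut $Q(R)$. The key reduction I would make is that a compact $2$-face of some $Q(R)$ is exactly a section of a $3$-dimensional face of $\sigma_\pi$: when every such compact $2$-face is a triangle, the edge-cycle relations of \hyperref[transferringexplained]{Figure \ref*{transferringexplained}} force all coordinates of $\overline t\in V(R)$ to agree and hence $T^1_{Y_\pi}(-R)=0$, whereas a quadrilateral compact $2$-face leaves a one-dimensional quotient $V(R)/\mathbb{C}(\underline 1)$ surviving the conditions of \hyperref[altmann96]{Theorem \ref*{altmann96}}, exactly as for the quadric cone $\TV(K_{2,2})$. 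Thus $Y_\pi$ is rigid if and only if every $3$-dimensional face of $\sigma_\pi$ is simplicial; by \hyperref[facetheorem]{Theorem \ref*{facetheorem}} such a face corresponds to a partition of $V(G^\pi)$ into four connected blocks, and its extremal rays are those first independent sets whose two-block partition (\hyperref[11thm]{Theorem \ref*{11thm}}) coarsens the four blocks, the face being simplicial precisely when exactly three such first independent sets exist.

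For part (1), $|\Ess(\pi)|=1$ forces $G^\pi=K_{m,n}$ (as observed in the proof of \hyperref[mainlemschu]{Lemma \ref*{mainlemschu}}), so $Y_\pi\cong \TV(K_{m,n})$, the affine cone over the Segre embedding of $\mathbb{P}^{m-1}\times\mathbb{P}^{n-1}$. If $m=1$ or $n=1$ then $\TV(K_{m,n})$ is smooth, hence rigid, so assume $m,n\ge 3$. Here the only first independent sets are the one-sided sets $U_i\backslash\{u_i\}$, whose associated subgraph isolates the single vertex $u_i$. I would check that every four-block partition arising as $\bigcap_{A\in S}\G\{A\}$ consists of three isolated vertices together with one connected complete bipartite block (the remaining intersection patterns give higher-dimensional faces); the only first independent sets coarsening such a partition are the three singleton isolations, so every $3$-face carries exactly three rays and is simplicial. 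By the reduction above $T^1_{Y_\pi}=0$, proving rigidity whenever $m\neq 2$ and $n\neq 2$ (the excluded case $m=n=2$ being the non-rigid quadric cone).

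For part (2), $|\Ess(\pi)|=2$ yields, by \hyperref[mainlemschu]{Lemma \ref*{mainlemschu}}, a single two-sided first independent set $C=C_1\sqcup C_2$, and $G^\pi$ is $K_{m,n}$ with the complete bipartite graph on $C_1\times C_2$ deleted; the associated subgraph $\G\{C\}$ has the two complete bipartite components $K_{|C_1|,\,n-|C_2|}$ and $K_{m-|C_1|,\,|C_2|}$. I would search for a non-simplicial $3$-face among the partitions refining $\G\{C\}$. Isolating a vertex inside a complete bipartite block merely detaches it, unless the block is a star, in which case isolating its centre shatters it; a quadrilateral ($4$-ray) $3$-face therefore appears exactly when one block is the star $K_{1,2}$ or $K_{2,1}$, so that isolating its centre produces a four-block partition with three singletons and one large block $R$ whose coarsening first independent sets are precisely the three singleton isolations together with $C$. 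This occurs iff $(|C_1|,\,n-|C_2|)=(1,2)$ or $(m-|C_1|,\,|C_2|)=(2,1)$, that is, iff $(|C_1|,|C_2|)=(1,n-2)$ or $(m-2,1)$, the two families being interchanged by the symmetry $U_1\leftrightarrow U_2$. Outside these two families every realizable four-block partition again admits exactly three coarsening first independent sets, so all $3$-faces are simplicial and $Y_\pi$ is rigid; this is the asserted equivalence.

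The step I expect to be the main obstacle is the completeness direction in part (2): proving that \emph{no} non-simplicial $3$-face occurs away from the two families. This amounts to classifying all four-block partitions that arise as $\bigcap_{A\in S}\G\{A\}$ and, for each, counting the first independent sets whose two-block partition coarsens it; the delicate point is controlling how vertex isolations interact with the two blocks of $\G\{C\}$ — distinguishing when a block is merely truncated (contributing no extra ray) from when a star-shaped block shatters (creating the fourth ray) — and verifying the count is exactly three in all remaining configurations. Establishing the dictionary ``quadrilateral $3$-face $\Leftrightarrow$ nonzero $T^1_{Y_\pi}(-R)$'' cleanly through \hyperref[altmann96]{Theorem \ref*{altmann96}}, and separately dispatching the small degenerate cases for consistency, are the remaining technical points.
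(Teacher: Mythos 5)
The paper's own proof of this lemma is a one-line citation to Theorems 5.3 and 5.6 of \cite{rigidportakal}, which treat $\TV(K_{m,n})$ and $\TV(K_{m,n}\setminus K_{a,b})$ directly; your proposal instead tries to build a self-contained argument. That is a legitimately different route, but it rests on a step that is not justified and cannot be taken for granted here: the ``key reduction'' that $Y_\pi$ is rigid if and only if every three-dimensional face of $\sigma_\pi$ is simplicial. The ``only if'' direction is available as a citation (Theorem \ref{theorem15}), but the ``if'' direction is precisely the main theorem of the paper (Theorem \ref{toricschurigid}), whose proof occupies several pages of delicate case analysis. Your one-sentence justification --- that when all compact $2$-faces of $Q(R)$ are triangles the cycle relations force all coordinates of $\overline{t}\in V(R)$ to agree --- is false as a general principle. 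The relations of Theorem \ref{altmann96} only propagate equality of the $t_i$ across compact edges that are \emph{shared} by compact $2$-faces or that meet at a \emph{non-lattice} vertex; if $Q(R)$ has a compact edge lying on no compact $2$-face, or its compact part is disconnected, or the relevant common vertices are lattice points, then $V(R)/\mathbb{C}(\underline{1})$ can survive even though every compact $2$-face is a triangle. Ruling out these configurations is exactly what the proof of Theorem \ref{toricschurigid} does for $|\Ess(\pi)|\ge 3$ (items 3 and 4 there deal with pairs of rays that span no $2$-face at all), and the same work would have to be done for $K_{m,n}$ and $K_{m,n}\setminus K_{|C_1|,|C_2|}$. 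Moreover, since the paper's proof of Theorem \ref{toricschurigid} explicitly defers the cases $|\Ess(\pi)|=1,2$ to this lemma, invoking that equivalence here would make the paper's architecture circular.

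The combinatorial part of your argument --- identifying the first independent sets, computing the components of $\bigcap_{A\in S}\G\{A\}$, and locating the unique non-simplicial $3$-face in the families $(|C_1|,|C_2|)=(1,n-2)$ or $(m-2,1)$ --- is sound and matches Lemma \ref{notrigidschus}, and it does yield the non-rigid direction of (2) via Theorem \ref{theorem15}. What is missing is the rigidity direction: a degree-by-degree verification that $T^1_{Y_\pi}(-R)=0$ for every $R\in M$ when no such face exists, in the style of the proof of Theorem \ref{toricschurigid} (or, as the paper does, an appeal to the already-established classification in \cite{rigidportakal}). As written, the proposal proves only half of each equivalence.
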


\begin{proof}
It follows from \cite[Theorem 4.3, 4.6]{rigidportakal}.
\end{proof}
\vspace{0.2cm}
\noindent From now on, we assume that $|\Ess(\pi)| \geq 3$. This means that we consider the associated connected bipartite graph $G^{\pi} \subsetneq K_{m,n}$ with $m,n \geq 4$. We denote by $\mathcal{I}_{G^\pi}^{(d)}$ the set of tuples of first independent sets forming a $d$-dimensional face of $\sigma_G$. Let $\sigma_{G^{\pi}}^{(d)}$ be the set of $d$-dimensional faces of $\sigma_{\pi}$. Recall the classification of $d$-dimensional faces of an edge cone in Theorem \ref{facetheorem} for a subset $S = \{A^{(1)}, \ldots, A^{(d)}\} \subseteq \mathcal{I}_G^{(1)}$ of $d$ first independent sets. Let $\Pi$ be the isomorphism from Theorem \ref{11thm}. Then we have
\begin{eqnarray*}
 \Pi^{d} \colon \mathcal{I}_{G^{\pi}}^{(d)} &\longrightarrow &\sigma_{G^{\pi}}^{(d)} \\
(A^{(1)}, \ldots, A^{(d)}) &\mapsto& (\Pi(A^{(1)}), \ldots, \Pi(A^{(d)})) = H_{\Val_S} \cap \sigma_{G^\pi}
\end{eqnarray*}
if and only if $\G[S]=\bigcap_{A \in S} \G\{A\}$ has $d+1$ connected components.
\begin{prop}\label{twofacesschu}
Let $A=U_1 \backslash \{ i \}$, $B= U_2 \backslash \{j\}$, $C = C_1 \sqcup C_2$ be three types of first independent sets of the bipartite graph $G^{\pi}$. 
\begin{enumerate}
\item[\normalfont (1)] For any $A, B \in \mathcal{I}^{(1)}_{G^{\pi}}$, $(A,B) \in \mathcal{I}^{(2)}_{G^{\pi}}$.
\item[\normalfont (2)] For any $C, C' \in \mathcal{I}^{(1)}_{G^{\pi}} $, $(C,C') \in \mathcal{I}^{(2)}_{G^{\pi}}$.
\item[\normalfont (3)] $(A,A') \notin \mathcal{I}^{(2)}_{G^{\pi}}$ if and only if there exists a first independent set $U_1 \backslash \{i, i'\} \sqcup C_2$ where $C_2 \subsetneq U_2$ is some vertex set with $|C_2| \leq n-2$.
\item[\normalfont (4)] $(A,C) \notin \mathcal{I}^{(2)}_{G^{\pi}}$ if and only $C_1=\{i\}$ or there exists $C' \in \mathcal{I}^{(1)}_{G^{\pi}}$ with $C_1 \backslash C'_1 = \{i\}$. 
\end{enumerate}
\end{prop}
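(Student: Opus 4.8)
The plan is to apply the face-dimension criterion of Theorem \ref{facetheorem} throughout: a pair $(X,Y)$ of first independent sets spans a two-dimensional face of $\sigma_\pi$ if and only if the intersection $\G\{X\} \cap \G\{Y\}$ has exactly three connected components (equivalently, the dual edge cone has dimension $m+n-3$). So for each of the four parts, the entire task reduces to computing the number of connected components of an intersection of two associated spanning subgraphs. The combinatorial backbone is that, by Theorem \ref{toricschu} and the proof of Lemma \ref{schulemma1}, $L(\pi)$ is a Ferrers shape, and each two-sided first independent set $C = C_1 \sqcup C_2$ is a ``staircase rectangle'' of the form $\{x_i+1,\ldots,m\} \sqcup \{y_{i-1}+1,\ldots,n\}$ indexed by consecutive essential entries. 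I will keep this explicit description of the $C$'s and the nested chain $C_1 \subsetneq C_1'$, $C_2' \subsetneq C_2$ from Lemma \ref{mainlemschu} at hand, since the component count is governed entirely by how these index intervals overlap.

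\textbf{Parts (1) and (2): the always-a-face cases.} For part (1), with $A = U_1 \backslash \{i\}$ and $B = U_2 \backslash \{j\}$, I would write out $\G\{A\}$ and $\G\{B\}$ explicitly. Each is a spanning subgraph splitting the vertices into two blocks; the one-sided set $A$ isolates the vertex $i \in U_1$ together with those $U_2$-vertices not adjacent to $A$, and similarly for $B$. Intersecting the two edge sets, I expect the single missing vertex on each side to force the intersection to break into exactly three components (the two ``small'' blocks around $i$ and $j$, plus the large remaining block), so $(A,B) \in \mathcal{I}^{(2)}_{G^\pi}$ always. For part (2), with $C,C'$ two-sided, I would use the nesting $C_1 \subsetneq C_1'$ and $C_2' \subsetneq C_2$: the two staircase rectangles are comparable, and $\G\{C\} \cap \G\{C'\}$ should split into the three pieces $\G[C_1 \sqcup N(C_1)]$, $\G[C_2' \sqcup N(C_2')]$, and the ``middle annulus'' $\G[(C_1' \backslash C_1) \sqcup (C_2 \backslash C_2')]$ together with the adjacent neighbor vertices. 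Verifying that this middle piece is connected and nonempty is the crux, and follows because the entries there form a solid rectangular sub-block of the Ferrers shape.

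\textbf{Parts (3) and (4): the obstruction cases.} These are the delicate ones and I expect part (4) to be the main obstacle. Here two one-sided sets $A = U_1 \backslash \{i\}$, $A' = U_1 \backslash \{i'\}$ on the \emph{same} side can fail to span a face, because removing two vertices from $U_1$ can over-fragment the intersection into four or more components. The strategy is to show that $\G\{A\} \cap \G\{A'\}$ has more than three components precisely when the two isolated columns $i,i'$ together with an intermediate vertex block detach as a separate piece --- and to recognize that this detached piece is exactly the induced subgraph of a \emph{genuine} two-sided first independent set $U_1 \backslash \{i,i'\} \sqcup C_2$. This is an ``if and only if'' so both directions need the existence criterion from the proof of Lemma \ref{schulemma1}: a candidate two-sided independent set is a first independent set exactly when its associated subgraph has two components, which happens exactly when the corresponding block is a solid Ferrers sub-rectangle. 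For part (4), the mixed case $(A,C)$, I would similarly analyze when the one-sided isolation of $i$ interacts with the staircase $C$; the failure occurs either when $C_1 = \{i\}$ (so isolating $i$ already fully separates the $C$-block, giving a fourth component) or when some neighboring first independent set $C'$ satisfies $C_1 \backslash C_1' = \{i\}$, meaning $i$ is exactly the ``new'' column distinguishing two consecutive staircases. In each direction I would translate the component count back to the explicit interval description and check that the extra component is nonempty iff the stated combinatorial condition holds. The principal difficulty throughout is bookkeeping: correctly identifying which neighbor vertices attach to which block after intersection, since a miscount by one component changes the conclusion. I would organize this by always reducing to whether a specific candidate index set is a solid rectangle in the Ferrers diagram, which is the uniform mechanism underlying all four parts.
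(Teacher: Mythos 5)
Your plan follows essentially the same route as the paper's proof: reduce each part to counting connected components of $\G\{X\}\cap\G\{Y\}$ via Theorem \ref{facetheorem}, exploit the Ferrers shape of $L(\pi)$ and the nested staircase structure of the two-sided first independent sets from Lemmas \ref{schulemma1} and \ref{mainlemschu} (in particular the connected ``middle rectangle'' for part (2)), and for parts (3) and (4) identify the offending extra component with a genuine maximal two-sided independent set. The paper executes parts (1) and (3) by a contradiction argument showing that any additional component would produce forbidden two-sided maximal independent sets violating the nesting, which is the mechanism your sketch implicitly relies on; otherwise the approaches coincide.
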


\begin{proof}
{\normalfont (1)} Suppose that there exist a pair $(A,B) \notin \mathcal{I}^{(2)}_{G^{\pi}}$. Consider the intersection subgraph $\G\{A\} \cap  \G\{B\}$ and assume that it has isolated vertices other than $\{i,j\}$. Consider the isolated vertices in $U_1 \backslash \{i\}$. This means that there exists a two-sided independent set consisting of these isolated vertices and $B$, which is impossible, since $B \in \mathcal{I}^{(1)}_{G^{\pi}}$. Now assume that $\G\{A\} \cap \G\{B\}$ consists of the isolated vertices $\{i,j\}$ and $k \geq 2$ connected bipartite graphs $G_i$. Let the vertex set of $G_i$ consist of $V_i \subsetneq U_1$ and $W_i \subsetneq U_2$. Since $B \in \mathcal{I}^{(1)}_{G^{\pi}}$, there exist an edge $(i, w_i) \in E(G^{\pi})$ for each $i \in [k]$ where $w_i \in W_i$. Symmetrically, since $A \in \mathcal{I}^{(1)}_{G^{\pi}}$, there exist an edge $(j,v_i) \in E(G^{\pi})$ for each $i \in [k]$ where $v_i \in V_i$. However, then for $I \subsetneq [k] $, we obtain the two-sided maximal independent sets of form $\bigsqcup_{i \in I} V_i \sqcup (B \backslash (\bigsqcup_{i \in I} W_i)$ which contradicts the construction of $G^{\pi}$. \\\\
{\normalfont (2)} Let $(x_j, y_j)$ and $(x_{i},y_{i})$ be two essential boxes with $x_{j}> x_i$ and $y_{j} < y_i$, associated to two first independent sets $C$ and $C'$ in $\mathcal{I}^{(1)}_{G^{\pi}}$.  It is enough to check if $\G[C'_1 \sqcup N(C'_1)] \cap \G[C_2 \cap N(C_2)]$ is connected. We observe that the edges of this graph are represented by the square with vertices $(x_{i}+1, y_{j-1}+1)$, $(x_{i} +1, y_{i-1})$, $(x_j, y_{j-1}+1)$, and $(x_j, y_{i-1})$, intersected with the diagram $D(\pi)$. This intersection is also a Ferrer diagram and connected.\\\\
{\normalfont (3)} Consider the intersection subgraph $\G\{A\} \cap \G\{A'\}$. Assume that it has only $\{i, i'\} \subsetneq U_1$ as isolated vertices and $k$ connected bipartite graphs. Then, as in case {\bf 1}, there exist first independent sets $C, C'$ with $C_1 \cap C'_1 = \emptyset$, which is impossible by Lemma \ref{mainlemschu}. Assume that it has the isolated vertices $\{i,i'\} \subsetneq U_1$ and $C_2 \subsetneq U_2$ with $|C_2| \leq n-2$. Then $C:= U_1 \backslash \{i,i'\} \sqcup C_2$ is maximal and thus a first independent set.  \\\\
{\normalfont (4)}  Suppose that $i \in C_1$ and $(A,C) \notin \mathcal{I}^{(2)}_{G^{\pi}}$. Consider the intersection subgraph $\G\{A\} \cap \G\{C\}$. Similarly to last investigations, we conclude $\G[C_1 \sqcup N(C_1)]$ cannot admit $\{i\}$ as its only isolated vertex. If $C_1 = \{i\}$, then the intersection subgraph admits of $|N(C_1)| +1 $ isolated vertices and $\G[C_2 \sqcup N(C_2)]$. Assume that the intersection subgraph consists of the isolated vertex $\{i\} \subsetneq C_1$ and some vertex set $C'_2 \subsetneq N(C_1)$. This means that $C':=C_1 \backslash \{i\} \sqcup C'_2 \sqcup C_2$ is a maximal two-sided independent set. Hence $C' \in \mathcal{I}^{(1)}_{G^{\pi}}$.
\end{proof}
\noindent In order to eliminate the non-rigid cases of $Y_{\pi}$ we introduce the following result. This allows us to focus only on simplicial three dimensional faces of $\sigma_{\pi}$.
\begin{thm}[{\cite[Theorem 3.18]{rigidportakal}}] \label{theorem15}
Let $G \subseteq K_{m,n}$ be a connected bipartite graph. Assume that the edge cone $\sigma_G$ admits a three-dimensional non-simplicial face. Then TV(G) is not rigid.
\end{thm}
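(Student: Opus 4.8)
The plan is to produce a single deformation degree $R\in M$ with $T^1_{\TV(G)}(-R)\neq 0$; since $T^1_{\TV(G)}=\bigoplus_{R\in M}T^1_{\TV(G)}(-R)$, this already shows non-rigidity. Let $\tau\preceq\sigma_G$ be the given non-simplicial three-dimensional face, with extremal ray generators $a_1,\dots,a_s$, where $s\geq 4$. By \hyperref[facetheorem]{Theorem \ref*{facetheorem}} we may write $\tau=H_{\Val_S}\cap\sigma_G$ for a suitable set $S$ of first independent sets, so that $\langle \Val_S,a_i\rangle=0$ for $1\le i\le s$, while $\langle \Val_S,a\rangle>0$ for every extremal ray $a$ of $\sigma_G$ outside $\tau$. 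The idea is to choose $R$ so that the two-dimensional cross-section of $\tau$ becomes the \emph{unique} compact $2$-face of the cross-cut $Q(R)$: this polygon has $s\geq 4$ edges, and it is precisely such polygons that carry nontrivial infinitesimal deformations, exactly as in the cone over the quadrilateral (the toric variety of $K_{2,2}$).

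To isolate $\tau$, fix an interior lattice point $R_0$ of $\sigma_G^{\vee}$, so that $\langle R_0,a\rangle>0$ on every ray of $\sigma_G$, and set $R:=R_0-\lambda\,\Val_S$ for $\lambda\gg 0$. Then $\langle R,a_i\rangle=\langle R_0,a_i\rangle>0$ on the rays of $\tau$, whereas $\langle R,a\rangle<0$ on every ray outside $\tau$, so the vertices of $Q(R)$ are exactly the points $\overline{a_i}=a_i/\langle R,a_i\rangle$, and all remaining rays enter only through the recession cone $Q(R)^{\infty}=\sigma_G\cap[R=0]$. Because $\tau$ is a face of $\sigma_G$ and $\Val_S\geq 0$ vanishes precisely on $\tau$, the cross-section $P=\tau\cap[R=1]=Q(R)\cap\{\Val_S=0\}$ is itself a face of $Q(R)$; it is two-dimensional and bounded, hence a compact $2$-face, and since it carries all the vertices of $Q(R)$ it is the unique compact $2$-face, with boundary edges $d^1,\dots,d^s$.

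Now the computation of \hyperref[altmann96]{Theorem \ref*{altmann96}} localizes entirely to $P$. The space $V_{\CC}(R)$ is cut out by the single cycle relation $\sum_{i=1}^{s}t_i\overline{\epsilon_i}\,d^i=0$ attached to the $2$-face $P$; as the $d^i$ span the plane of $P$ this is a rank-two system, so $\dim_{\CC}V_{\CC}(R)=s-2$ and $V_{\CC}(R)/\CC(\underline{1})$ has dimension $s-3$. If every $\overline{a_i}$ is a lattice vertex, then \hyperref[altmann96]{Theorem \ref*{altmann96}} imposes no transfer conditions and $\dim_{\CC}T^1_{\TV(G)}(-R)=s-3\geq 1$, so $\TV(G)$ is not rigid. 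In general the conditions $t_{ij}=t_{jk}$ force $\overline{t}$ to be constant along each boundary arc of $P$ between consecutive lattice vertices; writing $D_a$ for the edge vectors of the resulting lattice subpolygon (which satisfy $\sum_a D_a=0$), one obtains $\dim_{\CC}T^1_{\TV(G)}(-R)=\ell-3$, where $\ell$ is the number of lattice vertices of $P$, and this is nonzero as soon as $\ell\geq 4$.

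The main obstacle is therefore to guarantee $\ell\geq 4$, i.e.\ that $R$ can be chosen with $\langle R,a_i\rangle=1$ for all $i$ so that every vertex of $P$ is a lattice point. Since $\langle \Val_S,a_i\rangle=0$, this value equals $\langle R_0,a_i\rangle$ and is independent of $\lambda$; hence the requirement is that the primitive generators $a_1,\dots,a_s$ of $\tau$ lie on a common affine lattice hyperplane, that is, that the three-dimensional face $\tau$ be Gorenstein as a cone. This is exactly where the special combinatorics of edge cones must enter: the rays $a_i$ are the vectors $\mathfrak{a}$ attached to first independent sets by \hyperref[11thm]{Theorem \ref*{11thm}}, and these, together with the degree sequences $\Val_S$ defining the faces, admit an explicit integral (indeed $0/1$-type) description. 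One should read off from this description a lattice functional taking the value $1$ on all of $a_1,\dots,a_s$ and then correct it by $-\lambda\,\Val_S$ to obtain an $R\in M$ negative on the remaining rays. Establishing this coplanarity from the graph-theoretic data is the technical heart of the argument; granting it, the $T^1$-computation above is routine.
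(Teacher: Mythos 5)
First, note that the paper itself does not prove this statement: it is imported verbatim from \cite{rigidportakal} (Theorem 4.15), so the only comparison available is with the strategy that reference must carry out. Your overall architecture is the right one and matches the spirit of that source: isolate the non-simplicial three-dimensional face $\tau$ as the unique compact $2$-face of a cross-cut $Q(R)$, and then read off $T^1_{\TV(G)}(-R)$ from Theorem~\ref{altmann96} as the kernel of a single cycle relation modulo $\CC(\underline{1})$ and the gluing conditions at non-lattice vertices. The formal parts of this are fine: with $R=R_0-\lambda\Val_S$ and $\lambda\gg 0$ the polygon $P=\tau\cap[R=1]$ is indeed the unique compact $2$-face, and the dimension count $\ell-3$ in terms of the number $\ell$ of lattice vertices of $P$ is correct.

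The genuine gap is exactly the step you defer: producing a lattice functional $R_0\in M$ with $\langle R_0,a_i\rangle=1$ for at least four of the extremal generators $a_1,\dots,a_s$ of $\tau$ (equivalently, enough coplanarity of the primitive generators). Without it the argument proves nothing, since for an arbitrary choice of $R$ one may have $\ell\le 3$, in which case the transfer conditions $t_{ij}=t_{jk}$ at non-lattice vertices collapse $V_{\CC}(R)/\CC(\underline{1})$ to zero; and for a general cone ``non-simplicial $3$-face'' does \emph{not} imply non-rigidity, so this step cannot be finessed abstractly --- it is where the hypothesis that $\sigma_G$ is an \emph{edge} cone must be used. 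Concretely, the extremal rays of $\sigma_G$ are the $0/1$-vectors $e_i$, $f_j$ and the vectors $\mathfrak{c}$ attached to two-sided first independent sets (cf.\ Theorem~\ref{11thm} and the relations such as $\mathfrak{c}'=e_i+\mathfrak{c}-\sum_{j\in C'_2\setminus C_2}f_j$ used in the proof of Theorem~\ref{toricschurigid}), and the non-simplicial $3$-faces are classified (they are of the shape $\langle \mathfrak{c},\mathfrak{c}',e_i,f_j\rangle$ or $\langle \mathfrak{c},e_i,e_{i'},f_j\rangle$, as in Lemma~\ref{notrigidschus} and Example~\ref{notrigidschuexample}); from this explicit description one writes down $R$ directly and verifies $\langle R,\cdot\rangle=1$ on all four generators. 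Since you state this verification is ``the technical heart'' and then grant it, the proposal as written establishes only the reduction of the theorem to that unproved combinatorial fact, not the theorem itself.
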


\begin{lem}\label{notrigidschus}
Assume that $|\Ess(\pi)| \geq 3$.
\begin{enumerate}
\item[\normalfont (1)] Let $C,C' \in \mathcal{I}^{(1)}_{G^{\pi}}$ with $C'_1 \subsetneq C_1$ and $C_2 \subsetneq C'_2$. If $|C_1| -|C'_1| =1$ and $|C'_2| -|C_2| =1$, then $Y_{\pi}$ is not rigid. 
\item[\normalfont (2)] If there exists a first independent set $C \in \mathcal{I}^{(1)}_{G^{\pi}}$ with $|C_1|=1$ and $|C_2|=n-2$ or  $|C_1|=m-2$ and $|C_2|=1$, then $Y_{\pi}$ is not rigid. 
\end{enumerate}
\end{lem}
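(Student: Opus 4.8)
The strategy for both parts is to exhibit an explicit three-dimensional non-simplicial face of $\sigma_{\pi}$ and then invoke Theorem \ref{theorem15}. The two tools are Theorem \ref{facetheorem}, by which a set $S$ of first independent sets determines a face of $\sigma_{\pi}$ whose dimension is one less than the number of connected components of $\G[S] := \bigcap_{A \in S} \G\{A\}$ and which contains the extremal rays $\Gamma(S)$, and Proposition \ref{twofacesschu}, which records exactly which pairs of first independent sets span a two-dimensional face. In each part I will name four first independent sets $S$, use Proposition \ref{twofacesschu} to check that their pairwise two-face incidences form a $4$-cycle, and use Theorem \ref{facetheorem} to check that $\G[S]$ has exactly four connected components. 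The latter places the four distinct extremal rays $\Gamma(S)$ on a common three-dimensional face; a three-dimensional face carrying four distinct extremal rays is non-simplicial, so Theorem \ref{theorem15} yields non-rigidity.

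For part (1), the hypotheses $|C_1| - |C_1'| = 1$ and $|C_2'| - |C_2| = 1$ together with the nesting of two-sided first independent sets from Lemma \ref{mainlemschu} force $C$ and $C'$ to be consecutive, so that $C_1 = C_1' \sqcup \{a\}$ and $C_2' = C_2 \sqcup \{b\}$ for unique $a \in U_1$ and $b \in U_2$. I take $S = \{A, B, C, C'\}$ with $A := U_1 \setminus \{a\}$ and $B := U_2 \setminus \{b\}$, which are first independent sets by Lemma \ref{schulemma1}(1). Proposition \ref{twofacesschu} then gives: $(A,B)$ and $(C,C')$ span two-faces by its parts (1) and (2); $(A,C')$ and $(B,C)$ span two-faces by the two orientations of part (4); while $(A,C)$ and $(B,C')$ fail to span two-faces by part (4), since $C_1 \setminus C_1' = \{a\}$ is witnessed by $C'$ and $C_2' \setminus C_2 = \{b\}$ is witnessed by $C$. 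Hence the two-faces among these four rays form the cycle $A - B - C - C' - A$ with the two diagonals $AC$ and $BC'$ absent, the incidence pattern of a quadrilateral.

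For part (2), suppose $|C_1| = 1$ and $|C_2| = n - 2$; write $C_1 = \{c\}$ and $\{b_1, b_2\} = N(C_1) = U_2 \setminus C_2$. I take $S = \{A, B_1, B_2, C\}$ with $A := U_1 \setminus \{c\}$ and $B_t := U_2 \setminus \{b_t\}$ for $t = 1, 2$, again first independent sets by Lemma \ref{schulemma1}(1). Proposition \ref{twofacesschu} gives two-faces $(A,B_1), (A,B_2), (B_1,C), (B_2,C)$, while $(A,C)$ fails by part (4) because $C_1 = \{c\}$, and $(B_1,B_2)$ fails by part (3) because $C = \{c\} \sqcup (U_2 \setminus \{b_1, b_2\})$ is itself a first independent set of the form $U_2 \setminus \{b_1, b_2\} \sqcup C_1$. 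The incidences again form the quadrilateral cycle $A - B_1 - C - B_2 - A$. The symmetric case $|C_1| = m - 2$, $|C_2| = 1$ is handled by exchanging the roles of $U_1$ and $U_2$.

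In both parts it remains to verify that $\G[S]$ has exactly four connected components, so that by Theorem \ref{facetheorem} the four rays lie on a genuine three-dimensional face. This connected-component count is the main obstacle, and it is where the Ferrer-diagram structure of $L(\pi)$ from Lemma \ref{schulemma1} and Lemma \ref{mainlemschu} is needed: one tracks which edges of $G^{\pi}$ survive simultaneously in all of the spanning subgraphs indexed by $S$, expecting the two isolated vertices contributed by the one-sided sets together with the splittings recorded by the two-sided sets to produce precisely four components. Once this count is in hand, the associated face is three-dimensional and carries the four distinct extremal rays $\Gamma(S)$ in quadrilateral position, hence is non-simplicial, and Theorem \ref{theorem15} shows that $Y_{\pi}$ is not rigid.
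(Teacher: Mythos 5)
Your overall strategy is exactly the one the paper intends: the paper's own proof of this lemma is a one-line citation to \cite{rigidportakal} (Lemma 4.7(2)(i) and Lemma 4.8(2)) followed by Theorem \ref{theorem15}, and the source even contains a commented-out sketch naming precisely the quadruples you chose, namely $\langle \mathfrak{c},\mathfrak{c}',e_a,f_b\rangle$ for part (1) and $\langle \mathfrak{c}, e_i,e_{i'},f_j\rangle$ (the mirror of your $\langle \mathfrak{c}, \mathfrak{a}, \mathfrak{b}_1,\mathfrak{b}_2\rangle$) for part (2). Your verification of the pairwise two-face incidences via Proposition \ref{twofacesschu} is correct, granted that you are silently using the $U_1\leftrightarrow U_2$ symmetric forms of its parts (3) and (4), which are only stated for one side but clearly hold by symmetry.

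The genuine gap is the step you yourself flag and then skip: showing that $\G[S]$ has exactly four connected components, i.e.\ that the four distinct extremal rays actually lie on a common \emph{three}-dimensional face. This is not a routine afterthought but the entire content of the lemma. The quadrilateral pattern of two-faces does not substitute for it: a cone can contain four extremal rays whose two-face incidences form a $4$-cycle while the minimal face containing all four has dimension greater than three, and a cone can have pairs of rays spanning no two-face without possessing any non-simplicial three-dimensional face at all. Moreover, Theorem \ref{facetheorem} as stated ties a set of $d$ first independent sets to a face of dimension $d$, so invoking it with $|S|=4$ to produce a face of dimension $3$ requires the (true, but unstated) extension that the face $H_{\Val_S}\cap\sigma_{G^\pi}$ always has dimension one less than the number of components of $\G[S]$. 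To close the argument you must actually compute, using the Ferrer-diagram structure of $L(\pi)$ and the nesting from Lemma \ref{mainlemschu}, that intersecting the four spanning subgraphs leaves exactly four components (for instance, in part (1), the two isolated vertices $a$ and $b$ together with $\G[C_1'\sqcup N(C_1)]$ and $\G[C_2\sqcup N(C_2')]$); this computation is exactly what the paper delegates to \cite{rigidportakal}, and "expecting" the count to come out right is not a proof.
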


\begin{proof}
We refer again to \cite{rigidportakal}. These are the cases from Lemma 3.10 (2)$\casei$ and Lemma 2.11 (2). By Theorem \ref{theorem15}, we conclude that $Y_{\pi}$ is not rigid in these cases. 
\end{proof}

\noindent We denote the image of the first independent sets $A=U_1 \backslash \{i\}$, $B = U_2 \backslash \{j\}$, and $C$ under the map $\Pi$ of Theorem \ref{theorem15} by $\mathfrak{a}=e_i$, $\mathfrak{b}=f_j$, and $\mathfrak{c}$.

\begin{ex}\label{notrigidschuexample}
Let $\pi =[1, 10, 8, 7, 6,9,4,5,2,3] \in S_{10}$  and let us consider the diagram $L(\pi)$. The dotted boxes $L'(\pi)$ form a hook and therefore $Y_{\pi}$ is toric. Consider the first independent sets $C= \{8, 9\} \sqcup \{4, 5, 6, 7, 8\}$ and $C'=\{7, 8, 9\} \sqcup \{5, 6, 7, 8\}$ of the associated connected bipartite graph $G^{\pi} \subsetneq K_{9,8}$. By Lemma \ref{notrigidschus}, $\langle \mathfrak{c},\mathfrak{c}',e_7,f_4 \rangle$ spans a three-dimensional face of $\sigma_{\pi}$ and hence $Y_{\pi}$ is not rigid. 
\begin{center}
\ytableausetup{smalltableaux}
\begin{ytableau}
*(white) \bullet & *(white)  \bullet &*(white) \bullet  &*(white) \bullet  & *(white) \bullet  &*(white) \bullet &*(white)\bullet  & *(white)\bullet     \\
*(white)  \bullet & *(white)  &*(white) &*(white) & *(white)  &*(white) &*(white) & *(white)  \\
*(white) \bullet  & *(white)  &*(white) &*(white) & *(white)  &*(white) &*(white) & *(white)    \\
*(white) \bullet & *(white)  &*(white) &*(white) & *(white)  &*(white)  \\
*(white) \bullet & *(white)  &*(white) &*(white)  &*(white) &*(white)\\
*(white)  \bullet & *(white)  &*(white) &*(white) \times \\
*(white)  \bullet & *(white)  &*(white) \times    \\
*(white) \bullet & *(white) \\
*(white) \bullet & *(white) 
\end{ytableau}
\end{center}
\end{ex}
\vspace{0.5cm}
\noindent The cases in Lemma \ref{notrigidschus} are the only cases where $\sigma_{\pi}$ has non-simplicial three-dimensional faces. We conclude this by examining the non 2-face pairs from Proposition \ref{twofacesschu} (3) and (4).\\

\noindent From now on, we may assume that all three-dimensional faces of $G^{\pi}$ are simplicial. In the next proposition, we examine the triples which do not form a three-dimensional face of $\sigma_{\pi}$. 

\begin{prop}\label{threefacesschu}
Let $I$ be a triple of first independent sets of $G^{\pi}$ not forming a three-dimensional face. Assume that any pair of first independent sets of $I$ forms a two-dimensional face. Then the triple $I$ is
\begin{itemize}
\item[\normalfont (1)] $(A,A',A'') \in \mathcal{I}^{(3)}_{G^{\pi}}$ if and only if there exists $C \in \mathcal{I}^{(1)}_{G^{\pi}}$ with $C_1 = U_2 \backslash \{i,i',i''\}$.
\item[\normalfont (2)] $(A,A',C)\in \mathcal{I}^{(3)}_{G^{\pi}}$ if and only if  $C_1=\{i,i'\}$ or there exists $C' \in \mathcal{I}^{(1)}_{G^{\pi}}$ with $C_1 \backslash C'_1 =~ \{i,i'\}$.
\end{itemize}
\end{prop}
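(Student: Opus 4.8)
The plan is to convert the statement into a count of connected components via Theorem~\ref{facetheorem}: a triple $I\subseteq\mathcal{I}^{(1)}_{G^{\pi}}$ whose pairs already span two-dimensional faces spans a three-dimensional face of $\sigma_{\pi}$ if and only if $\G[I]=\bigcap_{P\in I}\G\{P\}$ has exactly four connected components. Since three distinct extremal rays cannot lie in one two-dimensional face, $\G[I]$ always has at least four components, and passing from a pair to the triple only deletes edges; so $I$ fails to span a three-dimensional face precisely when this deletion creates a fifth component. I use throughout that, by Theorem~\ref{toricschu}, $L(\pi)$ is a Ferrers shape, so in $G^{\pi}$ every column is adjacent to an initial segment of rows; hence the non-isolated part of any row-deletion stays connected through the first column, and the only mechanism producing an extra component is the stranding of a block of rightmost columns. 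Enumerating the shapes of $I$ in the letters $A$ (one-sided in $U_1$), $B$ (one-sided in $U_2$) and $C$ (two-sided) and invoking Proposition~\ref{twofacesschu}, one checks that any triple with one-sided sets from both parts, or with at least two two-sided sets, already has four components; so a failing triple is necessarily $(A,A',A'')$ (same part, say $U_1$; the $U_2$ case is symmetric) or $(A,A',C)$.

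For shape $(A,A',A'')$, with $A=U_1\backslash\{i\}$, $A'=U_1\backslash\{i'\}$, $A''=U_1\backslash\{i''\}$, each $\G\{\cdot\}$ detaches one row (together with the columns adjacent only to it, in case that row is the topmost), so $\G[I]$ is $G^{\pi}$ with the rows $i,i',i''$ removed. By the Ferrers remark the remainder is connected unless deleting these rows strands some rightmost columns, which can happen only if $\{i,i',i''\}$ meets the top of the shape. The hypothesis that each of the three pairs spans a two-dimensional face, read through Proposition~\ref{twofacesschu}(3), forbids any two of the rows from already stranding a column; together with the bottom-justification of every admissible $C_1$ (Lemma~\ref{mainlemschu}) this forces $\{i,i',i''\}$ to be the top three rows whenever stranding occurs. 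The stranded columns then form the set $C_2$ of a maximal two-sided independent set with row part $C_1=U_1\backslash\{i,i',i''\}$, and conversely such a $C$ supplies the fifth component; this is the equivalence in (1).

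For shape $(A,A',C)$ I would first note that $\G\{C\}$ splits $G^{\pi}$ into the two Ferrers blocks $\G[C_1\sqcup N(C_1)]$ and $\G[C_2\sqcup N(C_2)]$, while $\G\{A\}\cap\G\{A'\}$ detaches the rows $i,i'$. Applying Proposition~\ref{twofacesschu}(4) to the pairs $(A,C)$ and $(A',C)$ shows that deleting a single one of $i,i'$ from its block strands nothing; hence an extra component can only arise from deleting both $i$ and $i'$ from the same block, which (by the argument used in the shape reduction) must be the block $\G[C_1\sqcup N(C_1)]$, so $i,i'\in C_1$. Inside this single Ferrers block the situation is exactly that of part (1) for a deletion of two rows: the deletion strands a column block precisely when either $C_1=\{i,i'\}$ (the entire row set of the block disappears, stranding $N(C_1)$) or removing $i,i'$ from the top of the block matches a genuine step of the shape, which by Lemma~\ref{mainlemschu} is recorded by a strictly smaller two-sided first independent set $C'$ with $C_1\backslash C'_1=\{i,i'\}$. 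Identifying these stranded columns with the fifth component gives (2).

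The step I expect to be the main obstacle is the faithful use of the pairwise hypothesis: the assumption that all three pairs span two-dimensional faces is exactly what excludes the premature disconnections — a column stranded by deleting only two rows, or a row detached against the wrong block — and translating each such exclusion through Proposition~\ref{twofacesschu}(3) and (4) into the Ferrers geometry, in particular pinning the deleted rows to the top of the relevant block so that their complement is a legitimate bottom-justified $C_1$, is the delicate part. Once the deleted rows are located at the top of the shape, the bijection between stranded rightmost columns and the row part of a two-sided first independent set is immediate from Lemmas~\ref{schulemma1} and \ref{mainlemschu}.
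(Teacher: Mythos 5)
Your proposal is correct and follows essentially the same route as the paper: reduce via Theorem \ref{facetheorem} to counting connected components of $\G[I]$, dispose of the mixed-sided and multi-$C$ triples as always spanning three-dimensional faces, and settle the remaining shapes $(A,A',A'')$ and $(A,A',C)$ by the same arguments as Proposition \ref{twofacesschu} (3) and (4), using the pairwise two-face hypothesis to force $\{i,i'\}\subseteq C_1$ in the second case. The paper leaves these last steps as "analogous"; your Ferrers-diagram description of the stranded columns just makes that analogy explicit.
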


\begin{proof}
The first case follows analogously as in the proof of Proposition \ref{twofacesschu} (3). Consider a triple of form $(C,C',C'')$ with $C_1 \subsetneq C'_1 \subsetneq C''_1$ and $C''_2 \subsetneq C'_2 \subsetneq C_2$. Any such triple forms a 3-face, since the intersection graph $\G\{C\} \cap \G\{C'\} \cap \G\{C''\}$ is equal to
 \[\G[C_1 \sqcup N(C_1)] \sqcup \G[(C'_1 \backslash C_1) \sqcup (C_2 \backslash C'_2)] \sqcup \G[(C''_1 \backslash C') \sqcup(C'_2 \backslash C''_2)] \sqcup \G[C''_2 \sqcup N(C''_2)] .\]
 For such triples containing both $A$ and $B$, similar to the arguments in the proof of Proposition \ref{twofacesschu} (1), we conclude that they form 3-faces. Finally, consider the triple $(A,A',C)$. Since $(A,A') \in  \mathcal{I}^{(2)}_{G^{\pi}} $, $i$ and $i'$ cannot be both in $N(C_2)$. Assume that $i \in C_1$ and $i' \in N(C_2)$. Since $(A,C)$ and $(A',C)$ form 2-faces, the triple $(A,A',C)$ forms a 3-face. Hence we have that $\{i,i'\} \subseteq C_1$. The statement follows by the analysis similar to that in the proof of  Proposition \ref{twofacesschu} (4).
\end{proof}

\begin{rem}
In addition to the triple in Proposition \ref{threefacesschu}, the triples of first independent sets of $G^{\pi}$, containing the pairs in Proposition \ref{twofacesschu} (3) and (4) do not form a three-dimensional face of $\sigma_{\pi}$.
\end{rem}
\subsection{Classification of rigid toric varieties $Y_{\pi}$}
\noindent The following two results classify the rigid toric matrix Schubert varieties in terms of edge cone $\sigma_{\pi}$ and in terms of its Rothe diagram $D(\pi)$.

\begin{thm} \label{toricschurigid}
The toric variety $Y_{\pi}=\TV(\sigma_{\pi})$ is rigid if and only if the three-dimensional faces of $\sigma_{\pi}$ are all simplicial.
\end{thm}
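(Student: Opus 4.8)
The plan is to prove both directions by connecting the rigidity of $Y_{\pi}=\TV(\sigma_{\pi})$ to the combinatorial description of three-dimensional faces already developed in Propositions~\ref{twofacesschu} and~\ref{threefacesschu}. The backward direction is the easy one: if $\sigma_{\pi}$ admits a non-simplicial three-dimensional face, then by \hyperref[rigidportakal]{Theorem \ref*{theorem15}} the variety $Y_{\pi}$ is immediately non-rigid, so its contrapositive gives half of the equivalence. Thus the substance lies in the forward direction, namely showing that if every three-dimensional face of $\sigma_{\pi}$ is simplicial, then $T^1_{Y_{\pi}}=0$.

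For the forward direction I would first reduce to the degree-wise computation using \hyperref[altmann96]{Theorem \ref*{altmann96}}: since $\TV(G^{\pi})$ is smooth in codimension $2$, it suffices to show $T^1_{Y_{\pi}}(-R)=0$ for every deformation degree $R \in M$, and each such space is controlled by the combinatorics of the crosscut polytope $Q(R)$ together with the edge-identification rule $t_{ij}=t_{jk}$ across non-lattice vertices. The strategy is to assume all three-dimensional faces are simplicial and show that for every $R$, any admissible $\overline{t} \in V_{\CC}(R)$ is forced to lie in $\CC(\underline{1})$, i.e.\ all coordinates are equal, so that $T^1_{Y_{\pi}}(-R)=0$. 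The key input is that the edges and compact $2$-faces of $Q(R)$ correspond to extremal rays and $2$-faces of $\sigma_{\pi}$, whose combinatorics is exactly catalogued in \hyperref[twofacesschu]{Proposition \ref*{twofacesschu}}, while the triples that fail to span a $3$-face are catalogued in \hyperref[threefacesschu]{Proposition \ref*{threefacesschu}}. Concretely, whenever two compact $2$-faces of $Q(R)$ share an edge or a non-lattice vertex, the values of $\overline{t}$ on the relevant edges must agree, as illustrated in \hyperref[transferringexplained]{Figure \ref*{transferringexplained}}.

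The heart of the argument, and the step I expect to be the main obstacle, is to verify that the simpliciality hypothesis propagates these local equalities into a \emph{global} constraint forcing all $t_i$ equal. The plan is to argue that simpliciality of all three-dimensional faces means that the only obstructions to connecting two given rays through a chain of shared edges and non-lattice vertices would come precisely from the non-simplicial configurations described in \hyperref[notrigidschus]{Lemma \ref*{notrigidschus}}, namely the two ``staircase'' patterns with $|C_1|-|C'_1|=1$, $|C'_2|-|C_2|=1$, and the boundary patterns with $|C_1|=1,|C_2|=n-2$ (or the symmetric case). Having excluded these by hypothesis, one checks using the chain structure of the two-sided first independent sets from \hyperref[mainlemschu]{Lemma \ref*{mainlemschu}} (where $C_1 \subsetneq C'_1$ and $C'_2 \subsetneq C_2$ are totally ordered) that the rays of $\sigma_{\pi}$ are linked in a single connected web of shared $2$-faces, so the transfer rule propagates a single value across all coordinates.

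To make this rigorous I would examine the crosscut $Q(R)$ degree by degree: for generic $R$ the polytope has only lattice vertices and $T^1(-R)$ vanishes trivially, so the interesting degrees are those producing non-lattice vertices $\overline{a_j}=a_j/\langle R,a_j\rangle$. For each such $R$ I would use the explicit neighbor-set descriptions $N(C_1)=U_2\backslash C_2$ and $N(C_2)=U_1\backslash C_1$ from \hyperref[schulemma1]{Lemma \ref*{schulemma1}} to identify which rays $\mathfrak{a}$, $e_i$, $f_j$ satisfy $\langle R,\cdot\rangle=1$ and hence which edges of $Q(R)$ are compact; the simpliciality assumption then guarantees that no isolated non-lattice vertex can occur without an adjacent $2$-face that re-identifies its incident edge values. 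The delicate bookkeeping is to confirm that the $2$-faces enumerated in \hyperref[twofacesschu]{Proposition \ref*{twofacesschu}} (1) and (2), which always exist between any pair $(A,B)$ and any pair $(C,C')$, supply enough adjacency to chain every coordinate together once the forbidden patterns are absent, thereby collapsing $V_{\CC}(R)$ onto $\CC(\underline{1})$ and completing the proof.
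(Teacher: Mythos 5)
Your overall strategy coincides with the paper's: the backward direction is exactly the appeal to Theorem \ref{theorem15}, and the forward direction is the degree-by-degree analysis of $T^1_{Y_\pi}(-R)$ via Theorem \ref{altmann96}, using the crosscut $Q(R)$ and the face catalogues of Propositions \ref{twofacesschu} and \ref{threefacesschu}. (You should also record, as the paper does, that the cases $|\Ess(\pi)|=1,2$ are settled separately by Lemma \ref{eskirigids}.)

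There is, however, a genuine gap: essentially the entire content of the forward direction is the ``delicate bookkeeping'' you defer. Your claim that, once the patterns of Lemma \ref{notrigidschus} are excluded, the rays of $\sigma_\pi$ are linked in a single connected web of shared $2$-faces is precisely the statement that must be proved, and it does not follow in one step from the chain structure of Lemma \ref{mainlemschu}. The paper's argument is a four-part case analysis organized around which non-face configuration occurs among the vertices of $Q(R)$ --- a non-$3$-face triple $(e_1,e_2,e_3)$, a non-$3$-face triple $(e_1,e_2,\ce)$, a non-$2$-face pair $(e_1,e_2)$, or a non-$2$-face pair $(\ce,e_i)$ --- and within each case one must enumerate which further rays contribute vertices to $Q(R)$ (for instance, for the non-$2$-face pair $(e_1,e_2)$ the argument splits according to whether $k=0,1,2$ or $\ge 3$ vertices $\overline{f_j}$ lie in $Q(R)$, and according to whether some $\overline{\ce^{\,i}}$ is a lattice vertex), producing in each sub-case either a non-lattice vertex or a connecting $2$-face that transfers the $t$-values. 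None of this verification appears in your proposal. A secondary imprecision: for degrees $R$ where $Q(R)$ has only lattice vertices, $T^1(-R)$ does not ``vanish trivially'' --- Theorem \ref{altmann96} only bounds it by $V_{\CC}(R)/\CC(\underline{1})$, so one still needs the compact $2$-faces of $Q(R)$ to cut $V(R)$ down to $\CC(\underline{1})$; the identification rule at non-lattice vertices is an additional tool, not the only source of relations.
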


\begin{proof}
We have proven the statement for $|\Ess(\pi)| = 1,2$. We prove it now for $|\Ess(\pi)| \geq 3$. We examine the non 2-faces pairs from Proposition \ref{twofacesschu} and non 3-face triples from Proposition \ref{threefacesschu}. \\

\noindent {\bf{1.}} Suppose that $(e_1,e_2,e_3)$ does not span a 3-face and $(e_1,e_2)$, $(e_1,e_3)$ and $(e_2,e_3)$ do span 2-faces. By Proposition \ref{threefacesschu}, there exists a first independent set $C \in \mathcal{I}^{(1)}_{G^{\pi}}$ with $C_1 = U_1 \backslash \{1,2,3\}$ and $|C_2| \leq n-2$. Assume that $\overline{e_1}$, $\overline{e_2}$, and $\overline{e_3}$ are vertices in $Q(R)$ for some deformation degree $R \in M \cong \ZZ^{m+n}/\overline{(1,-1)}$.  Let $\mathfrak{a} \in \sigma_{\pi}^{(1)}$ be an extremal ray. Since $(\mathfrak{a},e_i, e_j)$ spans a 3-face of $\sigma_{\pi}$ for every $i, j \in [3]$ and $i \neq j$, we are left with showing that there exists no such $\overline{\mathfrak{a}} \in Q(R)$. However, even though we have that $R_i \leq 0$, for every $i \in [m+n] \backslash \{1,2,3\}$, $\overline{\mathfrak{c}} \in Q(R)$. \\

\noindent {\bf{2.}} Suppose that $(e_1,e_2,\mathfrak{c})$ does not span a 3-face and $(e_1,e_2)$, $(e_1,\mathfrak{c})$ and $(e_2,\mathfrak{c})$ do span 2-faces. By Proposition \ref{threefacesschu}, $|C_1| = \{1,2\}$ or there exists $C' \in \mathcal{I}^{(1)}_{G^{\pi}} $ such that $C_1 \backslash C'_1 = \{1,2\}$. Assume that $\overline{e_1}$, $\overline{e_2}$, and $\overline{\mathfrak{c}}$ are vertices in $Q(R)$ for some deformation degree $R\in M$. If $|C_1| = \{1,2\}$, then there exists $b \in N(C_1)$ such that $\overline{\mathfrak{b}} \in Q(R)$ is not a lattice vertex or there exist at least three vertices $b_i \in N(C_1)$ such that $\overline{\mathfrak{b_i}}$ is a lattice vertex in $Q(R)$. If $C_1 \backslash C'_1 = \{1,2\}$, then either $\overline{\mathfrak{c}}' \in Q(R)$ or $\overline{\mathfrak{b}} \in Q(R)$ for $b \in C'_2 \backslash C_2$. \\

\noindent {\bf{3.}} Suppose that $(e_1, e_2)$ does not span a 2-face and $\overline{e_1}$ and $\overline{e_2}$ are in $Q(R)$ for some deformation degree $R \in M$. Then there exists a first independent set $C = C_1 \sqcup C_2 \in \mathcal{I}^{(1)}_{G^{\pi}}$ with $C_1 = U_1 \backslash \{1,2\}$ and $2\leq |C_2| \leq n-2$. Remark that for any other two-sided first independent set $C' =C_1' \sqcup C_2' \in \mathcal{I}_{G^{\pi}}^{(1)}$, the pair $(C,C') \in \mathcal{I}_{G^{\pi}}^{(2)}$. Assume that there exist $k$ vertices $\overline{f_{j}}$ in $Q(R)$ where $j \in [k] \subseteq [n]$. If $k=0$, then $\mathfrak{c}$ is a non-lattice vertex in $Q(R)$. If $k=1$, then $\overline{f_{1}}$ is a non-lattice vertex in $Q(R)$. If $k \geq 3$, there can be at most one non 2-face pair say $(f_{1},f_{2})$. However, the other triples of type $(A,B,B')$ not containing both $U_2 \backslash \{1\}$ and $U_2 \backslash \{2\}$ form 3-faces.\\ 

\noindent Suppose now that $\overline{\mathfrak{c}^i} \in Q(R)$ is a lattice vertex. We can assume that there exists only one such extremal ray $\mathfrak{c}^i$, since any triple of type $(C,C',U_1 \backslash \{1\})$ and $(C,C', U_1 \backslash \{2\})$ form 3-faces. Moreover there exists at most one $f_{j'}$ such that $(f_{j'}, \ce^i)$ do not span a two-dimensional face. Hence we obtain that $V(R) / \CC (1,\ldots,1) =0 $ for this deformation degree $R \in M$. It leaves us to check the case where $k=2$. In this case, if the pair $\{f_{1}, f_{2}\}$ do not span a 2-face $\sigma_{\pi}$, then there exists a first independent set $C'' = C''_1 \sqcup C''_2 \in \mathcal{I}_{G^{\pi}}$ with $C''_2 = U_2 \backslash \{1,2\}$ and $|C''_1| \leq m-3$.  Then the only other vertex in $Q(R)$ is $\overline{\ce}$ and it is not a lattice vertex. Furthermore, $(e_i, f_j, \ce)$ spans three-dimensional faces of $\sigma_{\pi}$ for $i \in [2]$ and $j \in [2]$. Last, assume that $(f_{j_1}, f_{j_2})$ spans a 2-face of $\sigma_{G^{\pi}}$. As in the case where $k \geq 3$, it is enough to check the cases for only one vertex $\overline{\ce^i}$ in $Q(R)$. There exists at most one non 2-face pair containing $\ce^i$, say $(f_{j_1}, \ce)$. But then $(\ce_j, f_{j_2},e_1)$ is a 3-face of $\sigma_{G^{\pi}}$.  \\

\noindent {\bf{4.}} Lastly, suppose that $\{\ce, e_i \}$ does not span a 2-face and $\overline{\ce}$ and $\overline{e_i}$ are in $Q(R)$ for some deformation degree $R \in M$. Remark here that we excluded the cases where there exist non-simplicial three-dimensional faces. This means $\ce$ and $e_i$ forms 2-faces with each extremal ray of $\sigma_{\pi}$. Assume that there exist more than three vertices in $Q(R)$ other than $\overline{\ce}$ and $\overline{e_i}$. We examined the cases where non 3-face $(e_1,e_2,e_3)$ appears and where non 2-face $(e_1,e_2)$ appears in $Q(R)$. Therefore we assume that there exists another non 2-face pair, say $(\ce^{*}, e_j)$. But, since $\ce^*$ and $e_j$ also forms 2-faces with each extremal ray of $\sigma_{\pi}$, it is enough to check the cases where there exist less than five vertices in $Q(R)$.\\
Let us first consider the case where there exist exactly two more vertices in $Q(R)$ other than $\overline{\ce}$ and $\overline{e_i}$. We first start with the non 2-face pair $(A,C)$ where $C_1 = \{m\}$ and $A = U_1 \backslash \{m\}$. Then there exists a non-lattice vertex $\overline{j} \in Q(R)$ where $j \in U_2 \backslash C_2$. We observe that there exists no other first independent set $C' \in \mathcal{I}^{(1)}_{G^{\pi}}$ such that $C_2 \subsetneq C'_2$. Therefore it is impossible that there exists another non 2-face pair containing $\overline{\ce'}$.\\

\noindent In the other case where $(\ce,e_i)$ does not span a 2-face, there exists an extremal ray, say $\ce'$ such that $\ce' = e_i + \ce -  \sum_{j \in C'_2 \backslash C_2} f_{j}$. The vertex $\overline{\ce'}$ is in $Q(R)$, unless there exists $\overline{f_j} \in Q(R)$ where $j \in C'_2 \backslash C_2$. This vertex cannot be $\overline{f_j}$ with $\{j\}= C'_2 \backslash C_2$, because then $(\ce,\ce',e_i,f_j)$ spans a 3-face. Hence $\overline{\ce'}$ is one of these two vertices. It remains to check the case where other vertex is $\overline{e_{i-1}}$. Then, there exists a first independent set $C'' \in \mathcal{I}_{G^{\pi}}^{(1)}$. We have that $\overline{\ce''} \notin Q(R)$ if and only if there exists $\overline{f_j'}$ with $j' \in C''_2 \backslash C'_2$, by the same reasoning as before. Lastly, assume that there exists only one lattice vertex in $Q(R)$ other than $\overline{\ce}$ and $\overline{e_i}$.  We observe that $\overline{\ce'}$ is a lattice vertex of $Q(R)$ if there exist some $\overline{f_j} \in Q(R)$ where $j \in C'_2 \backslash C_2$. Therefore we assume that this lattice vertex is $\overline{f_j}$ for some $j \in [n]$. In order to obtain $\langle R, \ce'  \rangle = 0$, we must have $\{j\} = C'_2 \backslash C_2$, but this implies that $(\ce,\ce',e_i,f_j)$ is a 3-face of $\sigma_{\pi}$. 
\end{proof} 

\noindent We interpret the rigidity of $Y_{\pi}$ by giving certain conditions on the Rothe diagram. 

\begin{cor}\label{reformulation}
Let $\Ess(\pi)= \{ (x_i,y_i) \ | \ x_{k+1} < \ldots <x_{1} \text{ and } y_{1} < \ldots < y_{k+1} \}$ with $k \geq 3$. Then the toric variety $Y_{\pi}$ is rigid if and only if 
\begin{itemize}
\item[$\bullet$] $(x_1,y_1) \neq (m,2) $ and $(x_{k+1}, y_{k+1}) \neq (2,n)$ or
\item[$\bullet$] for any $i \in [k], $ $(x_{i}, y_i) \neq (x_{i+1} +1 , y_{i+1}-1) $.
\end{itemize} 
\end{cor}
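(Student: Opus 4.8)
The plan is to read Corollary~\ref{reformulation} as a dictionary translation of Theorem~\ref{toricschurigid} into the combinatorics of the Rothe diagram, so that the whole argument rests on the face analysis already carried out. By Theorem~\ref{toricschurigid} together with Lemma~\ref{notrigidschus} and the remark following it, $Y_\pi$ is non-rigid exactly when $\sigma_\pi$ has a non-simplicial three-dimensional face, and such a face is present if and only if one of the two configurations of Lemma~\ref{notrigidschus} occurs: either (1) two two-sided first independent sets $C,C'$ with $C'_1\subsetneq C_1$, $C_2\subsetneq C'_2$ and $|C_1|-|C'_1|=|C'_2|-|C_2|=1$, or (2) a two-sided first independent set $C$ with $(|C_1|,|C_2|)=(1,n-2)$ or $(|C_1|,|C_2|)=(m-2,1)$. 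Hence it suffices to express the presence of each configuration in terms of $\Ess(\pi)$; the first displayed bullet will be the negation of configuration~(2) and the second the negation of configuration~(1), so that $Y_\pi$ is rigid precisely when both negations hold.

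The key step is to record the explicit correspondence between first independent sets and essential entries from Lemma~\ref{schulemma1} and Lemma~\ref{mainlemschu}. The two-sided first independent sets are indexed by consecutive pairs of essential entries: the pair $(x_i,y_i),(x_{i+1},y_{i+1})$ (with $x_i>x_{i+1}$ and $y_i<y_{i+1}$) produces $C$ with $C_1=\{x_{i+1}+1,\dots,m\}$ and $C_2=\{y_i+1,\dots,n\}$, so that $|C_1|=m-x_{i+1}$ and $|C_2|=n-y_i$. I would also record the boundary normalisations $x_1=m$ and $y_{k+1}=n$, which come from the last row and column of the bounding rectangle of the Ferrers shape $L(\pi)$ being pinned by the lowest and rightmost essential entries; these are what allow $(x_1,y_1)$ and $(x_{k+1},y_{k+1})$ to enter the statement. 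Feeding these size formulas into configuration~(1) turns the minimal increments $|C_1|-|C'_1|=1$ and $|C'_2|-|C_2|=1$ for consecutive first independent sets into the requirement that the relevant row-gap and column-gap between neighbouring essential entries both equal $1$, i.e.\ the diagonal-adjacency pattern $(x_i,y_i)=(x_{i+1}+1,y_{i+1}-1)$ of the second bullet; feeding them into configuration~(2) degenerates the smallest (resp. largest) two-sided set and pins the extreme essential entry to $(x_1,y_1)=(m,2)$ (resp. $(x_{k+1},y_{k+1})=(2,n)$), which gives the first bullet.

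The main obstacle I expect is the index bookkeeping in this translation: correctly matching a ``differ by one vertex'' statement about $C_1$ and $C_2$ with the coordinate gap between the correct pair of essential entries, and tracking the interface between the two-sided first independent sets and the one-sided sets $U_1\setminus\{i\}$, $U_2\setminus\{j\}$, whose rays $e_i,f_j$ are the extra generators creating the non-simplicial faces. Extra care is needed at the boundary indices $i=1$ and $i=k$, where the degenerations of configuration~(2) live and where the normalisations $x_1=m$, $y_{k+1}=n$ must be invoked; verifying that these off-by-one alignments make the two bullets describe exactly the rigid locus is the heart of the proof. Once the dictionary is in place, the stated equivalence follows by combining the two translated conditions with Theorem~\ref{toricschurigid}.
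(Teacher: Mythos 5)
Your overall strategy coincides with the paper's: the printed proof of this corollary is a one-sentence citation of Lemma~\ref{notrigidschus} (combined with Theorem~\ref{toricschurigid} and the assertion that the two configurations of that lemma are the only sources of non-simplicial three-dimensional faces), and your plan is a fleshed-out version of that same reduction. The problem is that the one step you explicitly defer --- the ``index bookkeeping'' --- is the entire mathematical content of the statement, and with the formulas you yourself record it does not come out as claimed. Attaching to the consecutive pair $(x_i,y_i),(x_{i+1},y_{i+1})$ the maximal two-sided set $C^{(i)}$ with $C^{(i)}_1=\{x_{i+1}+1,\ldots,m\}$ and $C^{(i)}_2=\{y_i+1,\ldots,n\}$ (this is the convention of Lemma~\ref{schulemma1}, and the only one for which the sets are actually maximal and there are $k$ of them as Lemma~\ref{mainlemschu} requires), two consecutive sets $C^{(i)}\subsetneq$-nested against $C^{(i+1)}$ satisfy configuration (1) of Lemma~\ref{notrigidschus} exactly when $x_{i+1}-x_{i+2}=1$ \emph{and} $y_{i+1}-y_i=1$: the unit row increment compares the pair of entries $(i+1,i+2)$ while the unit column increment compares the pair $(i,i+1)$. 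That is a condition on three consecutive essential entries, not the single-pair condition $(x_i,y_i)=(x_{i+1}+1,y_{i+1}-1)$ of the second bullet, and the two conditions genuinely differ (e.g.\ with row gaps $(2,1,2)$ and column gaps $(1,2,2)$ the former holds and the latter fails). Similarly, configuration (2) forces both $y_1=2$ and $x_2=m-1$, since only $C^{(1)}$ can have $|C_1|=1$; the first bullet records only $(x_1,y_1)=(m,2)$ and drops the constraint on $x_2$.

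There is also a logical point you pass over: you conclude that $Y_\pi$ is rigid ``precisely when both negations hold,'' i.e.\ you read the connective between the two bullets as a conjunction, which is the only reading compatible with ``non-rigid iff configuration (1) or configuration (2) occurs,'' but it is not what the displayed statement literally says. None of this is resolved by the proposal as written: you assert the outcome of the translation rather than performing it, and performing it with your own size formulas yields conditions that do not match the bullets. To close the gap you would need to carry the computation through explicitly, identify which essential entries bound the singleton differences $C_1\setminus C_1'$ and $C_2'\setminus C_2$, and either reconcile the resulting index shift with the statement or conclude that the statement needs to be adjusted.
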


\begin{proof}
It follows by Lemma \ref{notrigidschus} which characterizes the non-simplicial three-dimensional faces.
\end{proof}

\begin{ex}
In the figure of Example \ref{notrigidschuexample}, consider the essential boxes $(x_2,y_2)$ and $(x_3,y_3)$ which are associated to the first independent sets $C'$ and $C$. We obtain that $(x_2,y_2) = (7,3) =(x_{3} +1 , y_{3} -1) $. Therefore $Y_{\pi}$ is not rigid. On the other hand, we observe the toric variety in Example \ref{rigidexample} is rigid.
\end{ex}

\section*{Acknowledgement}
\noindent This is part of the author’s Ph.D. thesis, written under the supervision of Klaus Altmann and published online at Freie Universit\"at Berlin Library.  The author would like to thank Berlin Mathematical School for the financial support.

\bibliographystyle{spmpsci} 
\bibliography{references}

\section*{Data Availability}
Data sharing not applicable to this article as no datasets were generated or analysed
during the current study.

\end{document}